\colorlet{AW}{green!40!black!60!}
\colorlet{FG}{red!40!blue!60!}
\newcommand{\vect}[1]{\mathbf#1}
\newcommand{\R}{\mathbb R}
\newcommand{\Q}{\mathbb Q}
\newcommand{\Z}{\mathbb Z}
\newcommand{\kk}{\mathbbm{k}}  
\newtheorem{theorem}{Theorem}[section]
\newtheorem{lemma}[theorem]{Lemma}
\newtheorem*{thm*}{Theorem}
\theoremstyle{definition}
\newtheorem{ex}[theorem]{Example}
\newtheorem*{def*}{Definition}
\newtheorem{dfn}[theorem]{Definition}
\newtheorem{rem}[theorem]{Remark}
\DeclareMathOperator{\charr}{char}
\DeclareMathOperator{\intt}{int}
\DeclareMathOperator{\supp}{supp}
\DeclareMathOperator{\trop}{trop}
\DeclareMathOperator{\Gr}{Gr}
\DeclareMathOperator{\cl}{cl}
\DeclareMathOperator{\val}{val}
\DeclareMathOperator{\Var}{V}
\DeclareMathOperator{\GL}{GL}
\newcommand{\KK}{\mathbbm{k}\{\!\{t\}\!\}}
\newcommand{\ourK}{K_{\mathbf{w}}}
\newcommand{\genPuiseux}{\kk\{\!\{x\}\!\} }
\newcommand{\bftheta}{\boldsymbol{\theta}}
\newcommand{\bfdelta}{\boldsymbol{\delta}}
\newcommand{\spec}{\phi^{\bftheta}_{\vect n}}
\newcommand{\bfgamma}{\boldsymbol{\gamma}}
\begin{document}

\title{Toric and tropical {B}ertini theorems in positive characteristic}

\author[Gandini]{Francesca Gandini}

\author[Hering]{Milena Hering}
\address{}
\email{}

\author[Maclagan]{Diane Maclagan}

\author[Mohammadi]{Fatemeh Mohammadi}

\author[Rajchgot]{Jenna Rajchgot}

\author[Wheeler]{Ashley K.\ Wheeler}

\author[Yu]{Josephine Yu}

\begin{abstract}
  We generalize the toric Bertini theorem of Fuchs, Mantova, and
  Zannier \cite{FMZ} to positive characteristic.  A key part of the
  proof is a new algebraically closed field containing the field
  $\kk(t_1,\dots,t_d)$ of rational functions over an algebraically
  closed field $\kk$ of prime characteristic.  As a corollary, we
  extend the tropical Bertini theorem of Maclagan and Yu
  \cite{maclagan2019higher} to arbitrary characteristic, which
  removes the characteristic dependence from the $d$-connectivity
  result for  tropical varieties from that paper.
\end{abstract}

\maketitle

\section{Introduction}

Bertini's theorem, which states that a general hyperplane section of
an irreducible variety is again irreducible, is a basic result in
algebraic geometry.  This has been generalized in many different ways,
most notably for this paper by Fuchs, Mantova, and Zannier \cite{FMZ},
who replace hyperplane sections by certain subtori when the variety is
a subvariety of an algebraic torus in characteristic zero. Our main result
removes this characteristic assumption, at the expense of some precision.

\begin{theorem}[Toric Bertini] \label{t:maintoricbertini}
  Let $\kk$ be an algebraically closed field of arbitrary characteristic.
  Let $X$ be a $d$-dimensional irreducible subvariety of $(\kk^*)^n$
  with $d \geq 2$, and let $\pi \colon (\kk^*)^n \rightarrow (\kk^*)^d$ be a
  morphism with $\pi|_X$ dominant and finite.  Suppose that the pullback of $\pi|_X$  
  along any isogeny $\mu \colon (\kk^*)^d \rightarrow (\kk^*)^d$ is irreducible.
    Then for every $1 \leq r \leq d-1$ the set of
  $r$-dimensional subtori $T \subseteq (\kk^*)^d$ with $\pi^{-1}(\bftheta
  \cdot T) \cap X$ irreducible for all $\bftheta \in (\kk^*)^d$ is dense in the Grassmannian 
  $\Gr(r,d)$.
\end{theorem}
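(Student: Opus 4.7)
The plan is to first reduce to the codimension-one case $r = d-1$, and then handle that case by working at the generic point of the parameter space of hyperplane subtori and invoking the new algebraically closed field $K_{\mathbf w}$ announced in the abstract.

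\textbf{Reduction to $r = d-1$.} I would proceed by descending induction on $r$. Assuming the result for $r = d-1$, pick a good $(d-1)$-dim subtorus $T_0$; then $X_0 := X \cap \pi^{-1}(T_0)$ sits inside the $(d-1)$-dim torus $\pi^{-1}(T_0) \cap (\kk^*)^n$, with induced projection $X_0 \to T_0 \cong (\kk^*)^{d-1}$ irreducible (by assumption on $T_0$), finite, and dominant. For generic $T_0$ in the good set, $X_0$ also inherits the isogeny hypothesis: any isogeny of $T_0$ extends to an isogeny of $(\kk^*)^d$, and the locus where pullback along a given isogeny fails irreducibility is a proper closed subset of $\Gr(d-1,d)$, so we avoid countably many such and iterate down to any desired $r$.

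\textbf{Codimension-one case.} Codim-one subtori correspond to primitive vectors $\mathbf a \in \Z^d/\{\pm 1\}$; these are Zariski-dense in $\Gr(d-1,d) \cong \mathbb{P}^{d-1}$, so it suffices to exhibit a nonempty Zariski-open $U \subseteq \mathbb{P}^{d-1}$ of good parameters. To produce $U$, I would work at the generic point of $\mathbb{P}^{d-1}$ using the universal hyperplane subtorus over $\Gr(d-1,d)$, base-changing to the function field of the Grassmannian. The conclusion that $\pi^{-1}(\bftheta T) \cap X$ is irreducible for all $\bftheta$ says exactly that every fiber of the composition $\chi_{\mathbf a}\circ\pi \colon X \to \kk^*$ is set-theoretically irreducible. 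By Stein factorization, this amounts to the statement that the factorization $X \to Y \to \kk^*$ has $Y \to \kk^*$ a universal homeomorphism, which in turn forbids $\chi_{\mathbf a}\circ\pi$ from being a nontrivial power in $\kk(X)$ or (in characteristic $p$) a nontrivial Frobenius power. Spreading out over $\mathbb{P}^{d-1}$ then produces $U$.

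\textbf{Main obstacle.} The isogeny hypothesis is calibrated precisely to rule out such factorizations: pullbacks of $\pi$ along Kummer-type isogenies of $(\kk^*)^d$ detect the perfect powers, while pullbacks along purely inseparable isogenies (Frobenius twists) detect the char-$p$ Frobenius powers. In characteristic zero, Fuchs-Mantova-Zannier carry out the resulting irreducibility argument inside the field of Puiseux series over $\kk(t)$, which is the algebraic closure there. The principal difficulty in characteristic $p$ --- and the main obstacle for this plan --- is that Puiseux series are no longer algebraically closed, so one needs a replacement. This is precisely the role of $K_{\mathbf w}$: an algebraically closed field containing $\kk(t_1,\dots,t_d)$ that carries the valuation-theoretic structure needed to mimic the char-zero argument. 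With $K_{\mathbf w}$ in hand, I expect the Fuchs-Mantova-Zannier strategy to transplant to positive characteristic, with the isogeny hypothesis guaranteeing that no new algebraic elements appear when pulling back the generic character to $\kk(X)$.
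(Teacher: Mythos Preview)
Your proposal has a genuine structural gap. The ``work at the generic point of $\mathbb{P}^{d-1}$ and spread out'' strategy does not apply here: codimension-one subtori are kernels of characters $\chi_{\mathbf a}(\mathbf t)=t_1^{a_1}\cdots t_d^{a_d}$, which only make sense for \emph{integer} $\mathbf a$. There is no universal hyperplane subtorus over $\Gr(d-1,d)$ in the algebraic-family sense, and no way to base-change to the function field of the Grassmannian and then specialize back to an integer point. Relatedly, you aim for a nonempty Zariski-open $U\subseteq\mathbb{P}^{d-1}$ of good parameters, but the paper only proves (and explicitly leaves open whether one can improve to) \emph{density}; what is actually obtained is, for each $\mathbf w\in\mathbb R^d_{li}$, an open Euclidean cone around $\mathbf w$ minus finitely many proper sublattices of $\mathbb Z^d$.

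The paper's route is essentially opposite to yours on two counts. First, rather than descending from $r=d-1$ to smaller $r$ (which would require showing that the restricted projection $X_0\to T_0$ again satisfies PB---a nontrivial and unproved claim), the paper reduces to $r=1$: if a one-dimensional subtorus $\tilde T\subset T$ has irreducible preimage, then so does $T$, because a factorization over $T$ specializes to one over $\tilde T$. Second, the role of $K_{\mathbf w}$ is not to mimic the characteristic-zero generic-point argument but to make a concrete specialization map well defined. After reducing to a monic hypersurface $f\in\kk[t_1^{\pm1},\dots,t_d^{\pm1},y]$, one factors $f=\prod(y-\alpha_i)$ in $K_{\mathbf w}[y]$, passes to a subring $K_C$ on which the substitution $\phi_{\mathbf n}^{\bftheta}:t_i\mapsto\theta_i x^{n_i}$ is a ring homomorphism for all $\mathbf n$ in an open cone $C$, and then shows (Lemmas on unbounded and bounded-infinite support) that for $\mathbf n$ in $C$ avoiding finitely many sublattices, no nontrivial monic factor of $f$ specializes to a Laurent polynomial. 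It is precisely the new $p$-discreteness axioms~(a) and~(d)---not merely algebraic closedness---that guarantee $\phi_{\mathbf n}^{\bftheta}$ is well defined and that individual exponents survive specialization; your sketch does not engage with this, which is where the main work lies.
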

In characteristic zero \cite{FMZ} show that the conclusion holds for
 subtori $T$ in a generic (Zariski open) set, rather than just in a dense
 set.  More precisely, there are a finite number of exceptional
 subtori that $T$ must avoid, and any other $T$ will have irreducible
 preimage.  A similar ``generic'' conclusion may hold in
 characteristic $p$; we are not aware of a counterexample, though our
 current techniques are insufficient for a proof.  The techniques of
 \cite{FMZ} cannot be extended to characteristic $p$ as they rely on
 results that are false in characteristic $p$.

The condition on the irreducibility of pullbacks along isogenies appears already in
\cite{FMZ} and is necessary for a generic result;
for example, for the projection $\pi: \Var(x-y^2z^2)
\rightarrow (\kk^*)^2$ onto the first two coordinates, the preimage of any subtorus
of $(\kk^*)^2$ of the form $(t^{2a},t^b)$ is reducible.  Thus the set of desired subtori cannot be open.  However the denseness conclusion may still hold without this pullback hypothesis. We do not have a counterexample; see Remark~\ref{r:densenessnecessary}.

One consequence of Theorem~\ref{t:maintoricbertini} is that we can remove the
characteristic assumption in the Tropical Bertini theorem of Maclagan
and Yu \cite{maclagan2019higher}.

\begin{theorem}[Tropical Bertini] \label{t:tropicalBertini}
 Let $X \subset (\kk^*)^n$ be an irreducible $d$-dimensional variety,
 with $d \geq 2$, over an algebraically closed valued field $\kk$ with $\mathbb Q$
 contained in the value group.  The set of rational affine hyperplanes
 $H$ in $\mathbb R^n$ for which the intersection $\trop(X) \cap H$ is
 the tropicalization of an irreducible variety is dense in the
 Euclidean topology on $\mathbb P^n_{\mathbb Q}$.
\end{theorem}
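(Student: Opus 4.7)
The plan is to deduce this from the Toric Bertini theorem (Theorem~\ref{t:maintoricbertini}) via the reduction carried out in the characteristic zero case \cite{maclagan2019higher}; since that reduction is formal and uses only general tropical input together with Toric Bertini, replacing the characteristic-zero ingredient by Theorem~\ref{t:maintoricbertini} should give the statement in arbitrary characteristic.

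Fix a rational affine hyperplane $H_0 \subset \R^n$ with equation $\vect a \cdot \vect w = c$, where $\vect a \in \Z^n$ is primitive and $c \in \Q$. I would first construct a monomial morphism $\pi \colon (\kk^*)^n \to (\kk^*)^d$ whose associated integer matrix $A$ of size $d \times n$ has $\vect a$ as its first row and whose restriction $\pi|_X$ satisfies the hypotheses of Theorem~\ref{t:maintoricbertini}. The first hypothesis (finiteness and dominance of $\pi|_X$) is obtained by choosing the remaining $d-1$ rows of $A$ generically, in a Noether-normalization style. The second hypothesis (irreducibility of pullbacks along isogenies) is arranged by passing to a suitable Galois cover of $X \to \pi(X)$ and restricting attention to an irreducible component, with additional care in characteristic $p > 0$ to incorporate Frobenius isogenies. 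Applying Theorem~\ref{t:maintoricbertini} with $r = d-1$ then produces a dense set $\mathcal{G} \subset \Gr(d-1,d)$ of $(d-1)$-dimensional subtori $T \subseteq (\kk^*)^d$ with $\pi^{-1}(\bftheta \cdot T) \cap X$ irreducible for every $\bftheta \in (\kk^*)^d$.

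Finally I would translate this back into tropical data. The subtorus $T_0 = \{y_1 = 1\}$ together with any $\bftheta$ satisfying $\val(\bftheta) = (c, 0, \dots, 0)$ yields $\pi^{-1}(\bftheta \cdot T_0) = \{\vect x : \vect x^{\vect a} = \theta_1\}$, whose tropicalization is exactly $H_0$. Density of $\mathcal{G}$ in $\Gr(d-1,d)$ lets me choose $T \in \mathcal{G}$ arbitrarily close to $T_0$, and then $\bftheta$ (using $\Q \subset$ value group to realize any desired $\val(\bftheta)$) so that the hyperplane $H := \trop(\pi^{-1}(\bftheta \cdot T))$ is arbitrarily close to $H_0$ in the Euclidean topology on $\mathbb P^n_\Q$. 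Theorem~\ref{t:maintoricbertini} guarantees that $Y := \pi^{-1}(\bftheta \cdot T) \cap X$ is irreducible, and a standard generic-coset argument in tropical geometry yields $\trop(Y) = \trop(X) \cap H$. The hardest step is the middle one: arranging the isogeny-pullback hypothesis of Theorem~\ref{t:maintoricbertini} for the constructed $\pi$ when $\charr \kk = p > 0$, where Frobenius isogenies complicate the usual Galois-closure reduction; everything else goes through as in \cite{maclagan2019higher}.
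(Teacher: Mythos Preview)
Your proposal is correct and follows essentially the same route as the paper: both deduce the tropical Bertini theorem from Theorem~\ref{t:maintoricbertini} by running the reduction of \cite{maclagan2019higher}*{Theorem~5} verbatim and swapping in the characteristic-free Toric Bertini at the one place it is invoked. One small remark: you flag the PB reduction in characteristic $p$ (handling Frobenius isogenies) as the hardest step, but the paper asserts that this portion of the argument from \cite{maclagan2019higher} carries over without change, so no new work is needed there; the only genuinely new input is Theorem~\ref{t:maintoricbertini} itself. Also, your ``standard generic-coset argument'' for $\trop(Y)=\trop(X)\cap H$ is made precise in the paper via the Transverse Intersection Lemma, with transversality arranged in the construction of the monomial map.
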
  

The characteristic zero case of the Tropical Bertini theorem was originally introduced in \cite{maclagan2019higher} to prove a
higher connectivity result for tropicalizations of irreducible
varieties \cite{maclagan2019higher}*{Theorem 1}.  Theorem~\ref{t:tropicalBertini} removes the
characteristic assumption from that connectivity theorem.


The key ingredient in the proof of Theorem~\ref{t:maintoricbertini} is a new field $\ourK$ that contains the algebraic
closure of $\kk(t_1,\dots,t_d)$. This field is smaller, in some crucial
aspects, than previously constructed 
algebraically closed fields containing $\kk(t_1,\dots,t_d)$ when $\kk$ has characteristic $p$.

The use for this field is best illustrated by the case when the
variety $X$ of Theorem~\ref{t:maintoricbertini} is a hypersurface, the
map $\pi \colon X \rightarrow (\kk^*)^d$ is projection onto the
first $d$ coordinates, and the subtorus $T \subseteq (\kk^*)^d$ is
one-dimensional: $T = \{ (t^{n_1},\dots,t^{n_d}) : t \in \kk^* \}$
for some $\mathbf{n} = (n_1,\dots,n_d) \in \mathbb Z^d$.  This is, in
fact, the core case of the proof.  Given an irreducible $f \in \kk[t_1^{\pm
    1},\dots,t_d^{\pm 1},y]$, we wish to show
that the set of $\mathbf{n} \in \mathbb Z^d$, for which the substitution $g \in
\kk[x^{\pm 1},y]$ given by
$$g(x,y) = f(x^{n_1},\dots,x^{n_d},y)$$ remains irreducible, is dense
in $\mathbb P^{d-1}_{\mathbb Q}$.  The key idea is to regard $f$ as a polynomial in $y$ with coefficients in $\kk(t_1,\dots,t_d)$.  The polynomial $f$ then factors as
$$f = \prod_{i=1}^s (y-\alpha_i),$$ where $\alpha_i$ are in the
algebraic closure of $\kk(t_1,\dots,t_d)$.  A precise description of
the algebraic closure is still unknown but several fields
containing it are known; see \cites{Christol,KedlayaEarly,KedlayaErratum,AdamczewskiBell,McD,GonzalezPerez, ArocaIlardi,ArocaRond, Saavedra}.  When $d=1$ and $\charr(\kk)=0$, the
algebraic closure of $\kk(t_1)$ is contained in the field of Puiseux
series $\kk\{\!\{t_1\}\!\} = \bigcup_{n \geq 1} \kk(\!(t_1^{1/n})\!)$.  The
exponents appearing in a particular Puiseux series all have a fixed
common denominator.  In characteristic $p$ this is relaxed to allow
arbitrary powers of $p$ in the denominator, subject to the
requirement, which is automatic for Puiseux series, that the set
of all exponents is well ordered.  As we recall in
Section~\ref{sec:fieldFamilies}, there are also multivariate
generalizations of this, so we may regard the roots $\alpha_i$ of $f$
as multivariate Laurent series with fractional exponents.

Given this description of the roots of $f$, the natural expectation is
that the roots of $g$ as a polynomial in $y$ are obtained from the
series $\alpha_i$ by the same specialization $t_i = x^{n_i}$.  To show
that $g$ remains irreducible for most $\mathbf{n}$ it then suffices to
show that none of these specializations of $\alpha_i$s, or the elementary symmetric
polynomials in them, which are the coefficients of any factors of $g$, are polynomials in $x$, as opposed to generalized Puiseux series.

However it is far from clear that this specialization map is well
defined.  For example, given the multivariate series
$$\alpha = \sum_{j \geq 1} t_1^{1-1/2^j}t_2^{1/2^j}$$ over a field of
characteristic $2$, we cannot make the substitution $t_1=t_2=x$.  A
contribution of this paper is to define a field $\ourK$
that contains the algebraic closure of $\kk(t_1,\dots,t_d)$, and has
natural subrings on which this specialization map is a well-defined
ring homomorphism, so this plan of attack goes through.

The structure of the paper is as follows.  The new field $\ourK$ is
introduced in Section~\ref{sec:fieldFamilies}, and it is shown to be
algebraically closed in Section~\ref{sec:algebraicallyClosed}.
Theorem~\ref{t:maintoricbertini} is proved in Section~\ref{sec:toricBertini}, while
Theorem~\ref{t:tropicalBertini} is proved in
Section~\ref{sec:tropicalBertiniTheorem}.

\noindent {\bf Acknowledgements.}  This project began at the BIRS
workshop on Women in Commutative Algebra, and we are grateful to the
organizers for bringing us together, and to BIRS for hosting.
Maclagan was partially supported by EPSRC grant EP/R02300X/1. Mohammadi was partially supported by EPSRC Early Career Fellowship EP/R023379/1, UGent BOF/STA/201909/038, and FWO grants (G023721N, G0F5921N). Rajchgot was partially supported by NSERC Grant RGPIN-2017-05732.  Yu was partially supported by NSF-DMS grant \#1855726.

\noindent {\bf Notation.}  Throughout this paper, by a cone in
$\mathbb R^d$ we mean a convex set closed under multiplication by
positive scalars.  It is {\em pointed} if its closure does not contain
a line.  For a cone $C$ in $\mathbb R^d$, the {\em dual cone}
$C^{\vee}$ is $\{ \mathbf{x} \in \mathbb R^d : \mathbf{x} \cdot
\mathbf{y} \geq 0 \text{ for all } \mathbf{y} \in C \}$.  We denote by
$\intt(C)$ the interior of a cone $C$, and by $\mathbb
R^d_{li}$ elements of $\mathbb R^d$ whose coordinates are linearly
independent over $\mathbb Q$.

\section{Field Families and $p$-discreteness}
\label{sec:fieldFamilies}

In this section we construct the field $\ourK$ containing
$\kk(t_1,\dots,t_d)$ that plays a key role in the proof of
Theorem~\ref{t:maintoricbertini}.  We will prove in
Section~\ref{sec:algebraicallyClosed} that $\ourK$ is algebraically closed.

\subsection{Field families}

We first recall previous constructions of algebraically closed fields
containing $\kk(t_1,\dots,t_d)$.

When $\charr(\kk)=0$ and $d=1$, the field of Puiseux series $\KK :=
\bigcup_{n \geq 0} \kk(\!(t^{1/n})\!)$ is algebraically closed and contains
$\kk(t)$.
Puiseux series are not algebraically closed when $\charr(\kk)=p>0$.
This was first observed by Chevalley \cite{Chevalley}, who observed that
the Artin--Schreier polynomial $x^p-x-t^{-1}$ has no Puiseux series root.
Abhyankar \cite{abhyankar} showed that the series $\sum_{j \geq 0}
t^{1/p^j}$ is a root.  This was generalized by Rayner
\cite{Rayner1}, who showed that the collection
\begin{equation}\label{eqtn:Rayneronevar}
  \left\{ \sum c_{\alpha} t^{\alpha} :
   \{ \alpha : c_{\alpha} \neq 0 \}\text{ is well ordered, and there is }
 N>0 \text{ with }  \alpha \in \bigcup_{j\geq 0}\frac{1}{Np^j}\Z  \text{ for all } \alpha \right\}
 \end{equation}
forms an algebraically closed field.
Rayner introduced the idea of a
  {\em field family}, which gives conditions on the possibilities of
  supports for power series of this form.  We now recall this in our setting.

\begin{dfn}
\label{dfn: fieldfamily}
Let $\mathcal A$ be a family of subsets of 
an ordered abelian group $(\Gamma, \preceq)$.
For $A\in\mathcal A$, let $S(A)$ denote the semigroup generated by $A$ under addition in $\Gamma$.
Then $\mathcal A$ is called a \emph{field family} with respect to
$\Gamma$ if the following axioms hold:
\begin{enumerate}[(i)]
\item Every $A\in\mathcal A$ is well ordered, i.e., every subset has a least element with respect to $\preceq$.
\item The elements of members of $\mathcal A$ generate $\Gamma$.
\item If $A,B\in\mathcal A$ then $A\cup B\in \mathcal A$.
\item If $A\in\mathcal A$ and $B\subseteq A$ then $B\in\mathcal A$.
\item Given $A\in\mathcal A$ and $\gamma\in\Gamma$, then $\gamma+A:=\{\gamma+a:\ a\in A \}\in \mathcal A$.
\item If $A\in\mathcal A$,
  with $a \succeq 0$ 
  for all $a \in A$, then $S(A)\in\mathcal A$.
\end{enumerate}
\end{dfn}

Given any field $\kk$, let $\kk^{\Gamma}$ denote the set of all mappings $\varphi:\Gamma\to \kk$.  The \emph{support} of $\varphi\in \kk^{\Gamma}$ is the set $\supp{\varphi} = \{\gamma\in\Gamma:\ \varphi(\gamma)\neq 0\}$.
For a family $\mathcal A$ of subsets of $\Gamma$ define
\[
\kk^{\Gamma}(\mathcal A) = \{\varphi\in \kk^{\Gamma}:\ \supp{\varphi}\in\mathcal A \}.
\]

We think of elements in $\kk^{\Gamma}(\mathcal A)$ as formal power series,
\[
\varphi = \sum_{\gamma\in \Gamma}\varphi(\gamma)\vect t^{\gamma} \in \kk^{\Gamma}(\mathcal A).
\]
For $\varphi, \psi\in \kk^{\Gamma}(\mathcal A)$ and $\gamma\in\Gamma$, addition is defined as
\[
(\varphi + \psi)(\gamma) = \varphi(\gamma) + \psi(\gamma)
\]
and multiplication is defined as
\[
(\varphi\cdot\psi)(\gamma) = \sum_{\gamma_1+\gamma_2=\gamma}\varphi(\gamma_1)\psi(\gamma_2).
\]

\begin{thm*}[\cite{Rayner1}*{Theorem 1}]
If $\mathcal A$ is a field family then $\kk^{\Gamma}(\mathcal A)$ is a field.
\end{thm*}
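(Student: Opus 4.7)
The plan is to reduce the field axioms to three substantive tasks, once addition and multiplication are defined on $\kk^{\Gamma}(\mathcal A)$: (a) closure under both operations; (b) well-definedness of the convolution $(\varphi\cdot\psi)(\gamma)$ as a finite sum in $\kk$; and (c) existence of a multiplicative inverse for each nonzero element. The commutative ring axioms then reduce to coefficient-wise identities in $\kk$ and are automatic once (a) and (b) are established.

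For (a), closure under addition is immediate from $\supp(\varphi+\psi)\subseteq \supp(\varphi)\cup\supp(\psi)$ together with axioms (iii) and (iv). For multiplication, let $A=\supp(\varphi)$ and $B=\supp(\psi)$, with minima $a_0,b_0$ guaranteed by axiom (i). Axioms (v) and (iii) give $(A-a_0)\cup(B-b_0)\in\mathcal A$, and all elements of this union are $\succeq 0$, so axiom (vi) yields $S\bigl((A-a_0)\cup(B-b_0)\bigr)\in\mathcal A$. Since
\[
\supp(\varphi\cdot\psi)\subseteq A+B\subseteq (a_0+b_0)+S\bigl((A-a_0)\cup(B-b_0)\bigr),
\]
axioms (v) and (iv) put $\supp(\varphi\cdot\psi)$ in $\mathcal A$. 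For (b), a short argument shows that the sumset of two well-ordered subsets of a totally ordered abelian group is well-ordered and that each element of the sumset has only finitely many representations $\gamma_1+\gamma_2$ with $\gamma_i$ in the respective supports; otherwise one extracts a strictly descending sequence in $A$ or $B$, contradicting well-ordering. Hence $(\varphi\cdot\psi)(\gamma)$ is a finite sum in $\kk$.

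The main obstacle is (c). Given nonzero $\varphi$ with support $A$ and minimum $a_0$, set $c=\varphi(a_0)\in\kk^*$ and write $\varphi=c\vect{t}^{a_0}(1-\eta)$, where $\eta=1-c^{-1}\vect{t}^{-a_0}\varphi$ satisfies $\supp(\eta)\subseteq (A-a_0)\setminus\{0\}$; in particular $\supp(\eta)$ is well-ordered and strictly positive. I would then define
\[
\varphi^{-1} \;=\; c^{-1}\vect{t}^{-a_0}\sum_{n\geq 0}\eta^{n},
\]
with the convention $\eta^0=1$. Two checks are needed. First, for each fixed $\gamma$ only finitely many $n$ contribute to the coefficient at $\gamma$: any nonzero contribution forces $\gamma\succeq n\cdot\min\supp(\eta)$, and since $\min\supp(\eta)\succ 0$ and $\Gamma$ is totally ordered, only finitely many $n$ satisfy this. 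Second, the total support lies in $\{0\}\cup S(\supp\eta)$, which is in $\mathcal A$ by axioms (iii), (iv), (vi); a shift by $-a_0$ via axiom (v) then places $\varphi^{-1}$ in $\kk^{\Gamma}(\mathcal A)$. Verifying $\varphi\cdot\varphi^{-1}=1$ reduces to the telescoping identity $(1-\eta)\sum_{n=0}^{N}\eta^{n}=1-\eta^{N+1}$, applied coefficient by coefficient: for each $\gamma$ the coefficient $\eta^{N+1}(\gamma)$ vanishes for $N$ sufficiently large by the same finiteness argument.
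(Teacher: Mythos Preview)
The paper does not give its own proof of this statement; it simply cites Rayner. So there is nothing to compare against, and I evaluate your argument on its own merits.

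Your overall architecture---closure via the field-family axioms, finiteness of the convolution sums, and a geometric-series construction of the inverse---is the standard one, and parts (a) and (b) are correct as written.

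There is, however, a genuine gap in part (c). You argue that for fixed $\gamma$ only finitely many $n$ contribute to the coefficient of $\sum_{n\ge 0}\eta^n$ at $\gamma$, because a nonzero contribution forces $\gamma \succeq n\cdot \min\supp(\eta)$, and ``since $\min\supp(\eta)\succ 0$ and $\Gamma$ is totally ordered, only finitely many $n$ satisfy this.'' That last step is exactly the Archimedean property, and it fails for a general ordered abelian group. For example, take $\Gamma=\Z^2$ with the lexicographic order and $\delta=(0,1)\succ 0$; then $n\delta=(0,n)\prec(1,0)$ for every $n\in\mathbb N$, so the inequality $\gamma\succeq n\delta$ gives no bound on $n$ whatsoever.

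The conclusion you need is still true, but it requires a sharper combinatorial fact (due to Neumann, and in fact cited later in the paper as \cite{Neumann}*{Theorem 3.4}): if $A\subset\Gamma$ is well ordered with every element $\succ 0$, then $S(A)$ is well ordered and each $\gamma\in\Gamma$ has only finitely many representations $\gamma=a_1+\cdots+a_n$ with $a_i\in A$; in particular only finitely many lengths $n$ occur. The proof uses the well-ordering of $A$ in an essential way, not merely the positivity of $\min A$. Once you invoke this lemma, your construction of $\varphi^{-1}$ and the telescoping verification go through unchanged. Note also that in the particular setting of this paper, $\Gamma=\Q^d$ ordered via $\mathbf u\mapsto \mathbf w\cdot\mathbf u$, which embeds into $\R$ and hence \emph{is} Archimedean; your shortcut happens to work there, but the theorem as stated is for arbitrary $(\Gamma,\preceq)$.
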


The field in \eqref{eqtn:Rayneronevar} is the case when $\Gamma = \mathbb Q$ and 
$\mathcal A$ consists of all well-ordered subsets of $\mathbb Q$ that lie in $\bigcup_{j \geq 0} \frac{1}{Np^j} \mathbb Z$ for some $N \in \mathbb N$.

When $\Gamma = \mathbb Q^d$ the field elements can be regarded as
generalized Puiseux series in $d$ variables $t_1,\dots,t_d$.
For $\charr(\kk)=0$, in~\cite{McD} McDonald constructed multivariate Puiseux series
solutions~$g$ for equations of the form $f(t_1,\dots,t_d,g)=0$ where $f
\in \kk[t_1,\dots,t_d,y]$.  The supports of the solution series are contained in sets
of the form $\frac{1}{N}
\mathbb Z^d \cap C$ where $N \in \mathbb N$ and $C$ is a pointed
cone.

Using Rayner's field family formalism
Aroca and Ilardi constructed algebraically closed fields
containing $\kk(t_1,\dots,t_d)$ and McDonald's series~\cite{ArocaIlardi}.
For $\charr(\kk)=p>0$ Saavedra gave an analogous construction of
algebraically closed fields containing $\kk(t_1,\dots,t_d)$
\cite{Saavedra}.  The supports of Saavedra's series are contained in sets
of the form $\bigcup_{j\geq 0}\frac{1}{Np^j}
\mathbb Z^d \cap C$ where $N \in \mathbb N$ and $C$ is a pointed
cone.

All of these fields contain elements that are not algebraic over
$\kk(t_1,\dots,t_d)$.  While there is some work on characterizing the
algebraic closure of $\kk(t_1,\dots,t_d)$
\cites{AdamczewskiBell,ArocaRond,Christol,KedlayaEarly,KedlayaErratum}
there is currently no complete answer in multiple variables and
arbitrary characteristic.

\subsection{The field of $p$-discrete series}
\label{sec: p-discreteness}

We now use the formalism of Definition~\ref{dfn: fieldfamily} to
construct a field containing $\kk(t_1,\dots,t_d)$ but contained in
Saavedra's field.  Let $\R^d_{li}$ denote the set of vectors in $\R^d$
whose coordinates are linearly independent over $\Q$.  For $\vect w\in\R^d_{li}$
we define an ordering $\preceq_{\vect w}$ on the abelian group $\Q^d$
as follows:
\[
\vect u\preceq_{\vect w} \vect v\quad\text{ when }\quad \vect w\cdot \vect u\leq \vect w\cdot \vect v.
\]
The condition that the entries of $\vect w$ are linearly independent over the rationals ensures that $\preceq_{\vect w}$ is a total order.

\begin{dfn}[$p$-discreteness]
\label{dfn: p-discrete}
For a subset $A \subseteq \mathbb Q^d$, $\bfgamma \in \mathbb Q^d$, 
and
$\mathbf{w} \in \mathbb R^d_{li}$, set 
\[
A^+_{\bfgamma, \mathbf{w}} = \{\vect a
\in A : \mathbf{w} \cdot \vect a > \mathbf{w} \cdot \bfgamma\}\ \ \text{and}\ \ 
A^-_{\bfgamma, \mathbf{w}} = \{\vect a \in A : \mathbf{w} \cdot \vect a <
\mathbf{w} \cdot \bfgamma \}.
\]
We say that a set $A \subseteq \mathbb Q^d$ is \emph{$p$-discrete with
  respect to $\mathbf{w}$} if the following conditions are satisfied.
\begin{enumerate}[(a)]
\item 
\label{item:perturbation} 
There exists an open cone  $\sigma$ containing $\mathbf{w}$ for which the set $\{\mathbf{w}'\cdot \mathbf{a}  : \mathbf{a} \in A \}$ is well ordered when $\mathbf{w}' \in \sigma \cap \mathbb R^d_{li}$.
\item 
\label{item:cone} 
There is $N>0$, $\bfgamma \in \mathbb Q^d$, a pointed  cone $C$ with $\mathbf{w} \in \intt(C^{\vee})$ such that
\[
A \subseteq (\bfgamma+C) \cap \left(\bigcup_{j \geq 0} \frac{1}{Np^j} \mathbb Z^d\right).
\]
\item 
\label{item:convergence} 
For any sequence $\{\vect a_i \}$ in $A$, if the sequence $\{
   \mathbf{w} \cdot \mathbf{a}_i \}$ converges in $\mathbb R$ then
  $\{\vect a_{i}\}$ converges to a point of $\mathbb Q^d$.
\item 
\label{item:plusminus} 
For all $\bfgamma' \in \mathbb Q^d$ there
  is an open cone $\sigma_{\bfgamma'}$ containing $\mathbf{w}$ for which for
  all $\mathbf{w}' \in \sigma_{\bfgamma'} \cap \mathbb R^d_{li}$, we have
  $A^+_{\bfgamma',\mathbf{w}'} =A^+_{\bfgamma',\mathbf{w}}$ and
  $A^-_{\bfgamma',\mathbf{w}'} =A^-_{\bfgamma',\mathbf{w}}$.

\end{enumerate}
\end{dfn}

For a given $p$-discrete set $A$ we may choose the closure of $\sigma$ in
condition~\ref{item:perturbation} and the dual cone $C^{\vee}$ of $C$ in
condition~\ref{item:cone} to coincide, at the expense of at least one
of them not being the largest possible, since $\sigma$ can be replaced
with a smaller open cone and $C$ can be replaced with a larger pointed
cone.  In condition~\ref{item:plusminus} it is
possible that a different $\sigma_{\bfgamma'}$ is needed for each
$\bfgamma'$.  Note that condition~\ref{item:plusminus} is equivalent
to the existence of open cones $\sigma_{\bfgamma'}$ containing $\mathbf{w}$ such that  that
$A^-_{\bfgamma',\mathbf{w}} \subseteq \bfgamma' -
\sigma_{\bfgamma'}^{\vee}$, and $A^+_{\bfgamma',\mathbf{w}} \subseteq
\bfgamma'+\sigma_{\bfgamma'}^{\vee}$.

\begin{dfn}
For fixed $\mathbf{w} \in \mathbb R^{d}_{li}$,  we define $\mathcal A_{\mathbf{w}}$ to be the
set
\begin{equation}
\label{eqtn:charpfieldfamily}
  \mathcal A_{\mathbf{w}} = \{ A \subset \mathbb Q^d:
  A \text{ is } p \text{-discrete with respect to }\mathbf{w} \}.\tag{$*$}
\end{equation}
\end{dfn}
\begin{rem}
  Saavedra's field family \cite{Saavedra}*{Proposition 5.1} satisfies
  condition~\ref{item:cone}, and also a weaker version of
  condition~\ref{item:perturbation}, where the well ordering is
  required only for $\mathbf{w}$.  The stronger condition~\ref{item:perturbation} is needed in
  Section~\ref{sec:toricBertini} to show that some specialization maps
  $t_i = x^{n_i}$ are well defined.   Condition~\ref{item:plusminus}
  is needed in the proof of algebraic closure in
  Section~\ref{sec:algebraicallyClosed} and is also used in Section~\ref{sec:toricBertini}.
  Condition~\ref{item:convergence} is
  needed in the proof of Lemma~\ref{condition d} to show that these
  series form a field family. 
\end{rem}

In the rest of this section we show that  $\mathcal A_{\mathbf{w}}$ is a field family with respect to $(\Q^d,\preceq_{\mathbf{w}})$.

\begin{lemma} 
\label{lem:bounded}
Fix $\vect w\in \R^d_{li}$, and $\bfgamma\in\Q^d$.  Suppose $A\in \mathcal A_{\vect w}$ with $\mathbf{w} \cdot \vect a \geq 0$ for all $\vect a \in A$.
Let $S(A)$ be the semigroup generated by $A$ under addition in $\Q^d$ and let $S(A)^-_{\bfgamma,\vect w}$ be as in Definition~\ref{dfn: p-discrete}.  Then there is $M>0$ such that for any nonzero  $\vect s=\sum_{i=1}^m\vect a_i\in S(A)^-_{\bfgamma,\vect w}$, where $\vect a_i \in A \setminus \{0\}$, the number $m$ of summands is at most $M$.
\end{lemma}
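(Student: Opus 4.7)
The plan is to exploit the fact that the well-ordering hypothesis, combined with $\Q$-linear independence of the entries of $\vect w$, forces a uniform positive lower bound on the values $\vect w \cdot \vect a$ over nonzero $\vect a \in A$, and then to bound $m$ by a simple counting argument.

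First I would argue that every nonzero $\vect a \in A$ actually satisfies $\vect w \cdot \vect a > 0$. Indeed, since $\vect w \in \R^d_{li}$ and $\vect a \in \Q^d$, the equation $\vect w \cdot \vect a = 0$ forces $\vect a = 0$. Together with the assumption $\vect w \cdot \vect a \geq 0$ for all $\vect a \in A$, this gives $\vect w \cdot \vect a > 0$ for every $\vect a \in A \setminus \{0\}$.

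Next, applying condition \ref{item:perturbation} in Definition~\ref{dfn: p-discrete} with $\vect w' = \vect w$ (note $\vect w \in \sigma \cap \R^d_{li}$), the set $\{\vect w \cdot \vect a : \vect a \in A\}$ is a well-ordered subset of $\R_{\geq 0}$. Hence its nonempty subset $\{\vect w \cdot \vect a : \vect a \in A \setminus \{0\}\} \subseteq \R_{>0}$ has a minimum element $\epsilon > 0$ (if $A \subseteq \{0\}$ the lemma is vacuous and we may take $M = 1$).

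Finally, for any nonzero $\vect s = \sum_{i=1}^{m} \vect a_i \in S(A)^-_{\bfgamma, \vect w}$ with $\vect a_i \in A \setminus \{0\}$, we obtain
\[
m \epsilon \;\leq\; \sum_{i=1}^m \vect w \cdot \vect a_i \;=\; \vect w \cdot \vect s \;<\; \vect w \cdot \bfgamma,
\]
so $m < (\vect w \cdot \bfgamma)/\epsilon$. (In particular this is only possible when $\vect w \cdot \bfgamma > 0$; otherwise $S(A)^-_{\bfgamma, \vect w}$ contains no nonzero element and the claim is trivial.) Setting $M = \max\{1, \lceil (\vect w \cdot \bfgamma)/\epsilon \rceil\}$ then gives the required bound. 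There is no substantive obstacle here: the only nontrivial point is extracting the positive minimum $\epsilon$, which is a direct consequence of the well-ordering in \ref{item:perturbation} combined with the $\Q$-linear independence of $\vect w$.
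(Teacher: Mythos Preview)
Your proof is correct and follows essentially the same approach as the paper's: both extract, from the well-ordering in condition~\ref{item:perturbation}, a positive lower bound on $\vect w \cdot \vect a$ for nonzero $\vect a \in A$, and then bound $m$ via $m \leq (\vect w \cdot \bfgamma)/\epsilon$. The only cosmetic difference is that the paper phrases this as a contradiction (if $m$ were unbounded, one would obtain elements of $A$ with $\vect w \cdot \vect a$ arbitrarily close to $0$, violating well-ordering), whereas you directly name the minimum $\epsilon$; you are also more explicit about why $\vect w \cdot \vect a = 0$ forces $\vect a = 0$.
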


\begin{proof}
  We have $\mathbf{w}\cdot \vect s = \sum_{i=1}^m \mathbf{w} \cdot \vect a_i$, so 
\[0< \min_i \{ \mathbf{w}
\cdot \vect a_i \} \leq \frac{\mathbf{w} \cdot \vect s}{m} < \frac{\mathbf{w} \cdot \bfgamma}{m},
\]
where the last inequality holds because $\vect s\in
S(A)^-_{\bfgamma,\vect w}$.  If for all $m > 0$ there is
$\mathbf{s} \in S(A)^-_{\bfgamma, \mathbf{w}}$ that is the sum of at least $m$ nonzero terms, we would
get a contradiction to well-ordering of the set $\{\vect w\cdot \vect
a \colon \vect a \in A\}$ given in condition~\ref{item:perturbation}.
\end{proof}

\begin{lemma}\label{condition d}
  Let $A\in\mathcal A_{\vect w}$ and let $S(A)$ be the semigroup generated by elements of $A$ under addition.  Suppose that $\mathbf{w} \cdot \mathbf{a} \geq 0$ for all $\vect a \in A$, and  
  conditions~\ref{item:perturbation}, \ref{item:cone},
  \ref{item:convergence} of $p$-discreteness hold for $S(A)$.
  Then $S(A)$ also satisfies
  condition~\ref{item:plusminus}, so $S(A)$ is in $\mathcal
  A_{\mathbf{w}}$.
\end{lemma}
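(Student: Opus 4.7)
The plan is to prove condition~\ref{item:plusminus} for $S(A)$ by contradiction. Fix $\boldsymbol{\gamma}'\in\mathbb{Q}^d$ and suppose no open cone around $\mathbf{w}$ works at $\boldsymbol{\gamma}'$. Then there exist sequences $\mathbf{w}_k\to\mathbf{w}$ in $\mathbb{R}^d_{li}$ and $\mathbf{s}_k\in S(A)\setminus\{\boldsymbol{\gamma}'\}$ such that the signs of $\mathbf{w}\cdot(\mathbf{s}_k-\boldsymbol{\gamma}')$ and $\mathbf{w}_k\cdot(\mathbf{s}_k-\boldsymbol{\gamma}')$ disagree. After passing to a subsequence and using the symmetry between the $+$ and $-$ sides, I may assume $\mathbf{w}\cdot\mathbf{v}_k<0$ while $\mathbf{w}_k\cdot\mathbf{v}_k\geq 0$, where $\mathbf{v}_k=\mathbf{s}_k-\boldsymbol{\gamma}'$. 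The estimate $|\mathbf{w}\cdot\mathbf{v}_k|\leq\|\mathbf{w}-\mathbf{w}_k\|\cdot\|\mathbf{v}_k\|$ forces $\mathbf{w}\cdot\hat{\mathbf{v}}_k\to 0$, where $\hat{\mathbf{v}}_k=\mathbf{v}_k/\|\mathbf{v}_k\|$.

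I would first rule out $\|\mathbf{v}_k\|\to\infty$. Condition~\ref{item:cone} for $S(A)$ places $\mathbf{s}_k\in\boldsymbol{\gamma}_0+C_0$ with $C_0$ a pointed cone and $\mathbf{w}\in\intt(C_0^\vee)$, so any subsequential limit of $\hat{\mathbf{v}}_k$ lies in the compact set $\overline{C_0}\cap S^{d-1}$ on which $\mathbf{w}\cdot(\cdot)$ is strictly positive; this contradicts $\mathbf{w}\cdot\hat{\mathbf{v}}_k\to 0$. Hence $\|\mathbf{v}_k\|$ is bounded, so $\mathbf{w}\cdot\mathbf{s}_k\to\mathbf{w}\cdot\boldsymbol{\gamma}'$. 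Condition~\ref{item:convergence} for $S(A)$ produces a limit $\mathbf{s}^*\in\mathbb{Q}^d$ with $\mathbf{w}\cdot(\mathbf{s}^*-\boldsymbol{\gamma}')=0$, which together with $\mathbf{w}\in\mathbb{R}^d_{li}$ and $\mathbf{s}^*-\boldsymbol{\gamma}'\in\mathbb{Q}^d$ forces $\mathbf{s}_k\to\boldsymbol{\gamma}'$.

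The decisive step uses Lemma~\ref{lem:bounded} to write $\mathbf{s}_k=\sum_{i=1}^{m}\mathbf{a}_{k,i}$ with $\mathbf{a}_{k,i}\in A\setminus\{0\}$ and $m\leq M$ constant after passing to a subsequence. Since each $\mathbf{w}\cdot\mathbf{a}_{k,i}$ is nonnegative with bounded total sum, each is bounded; well-ordering~\ref{item:perturbation} for $A$ restricts $(\mathbf{w}\cdot\mathbf{a}_{k,i})_k$ to be eventually constant or strictly increasing toward a limit $\lambda_i$, and condition~\ref{item:convergence} for $A$ upgrades this to $\mathbf{a}_{k,i}\to\mathbf{a}_i^*\in\mathbb{Q}^d$ with $\sum_i\mathbf{a}_i^*=\boldsymbol{\gamma}'$. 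Since $\mathbf{s}_k\neq\boldsymbol{\gamma}'$, at least one index $i$ is non-stationary and satisfies $\mathbf{a}_{k,i}\in A^-_{\mathbf{a}_i^*,\mathbf{w}}$. Applying condition~\ref{item:plusminus} for $A$ at each $\mathbf{a}_i^*$ produces open cones $\sigma_i\ni\mathbf{w}$ for which $\mathbf{w}'\cdot(\mathbf{a}_{k,i}-\mathbf{a}_i^*)<0$ for all $\mathbf{w}'\in\sigma_i$ in the non-stationary case. Because $\bigcap_{i=1}^m\sigma_i$ is open and contains $\mathbf{w}$, $\mathbf{w}_k$ lies in it for large $k$, and summing across $i$ yields $\mathbf{w}_k\cdot(\mathbf{s}_k-\boldsymbol{\gamma}')<0$, contradicting $\mathbf{w}_k\cdot\mathbf{v}_k\geq 0$. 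The main subtlety this plan confronts is that the limit tuple $(\mathbf{a}_1^*,\ldots,\mathbf{a}_m^*)$ depends on the chosen subsequence and a priori one could not control all such tuples uniformly across $S(A)^-_{\boldsymbol{\gamma}',\mathbf{w}}$; the contradiction-by-subsequence structure bypasses this obstacle because only a single successful decomposition is needed.
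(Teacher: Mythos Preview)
Your argument is correct and takes a genuinely different route from the paper's proof.  The paper proceeds constructively: for $S(A)^+_{\bfgamma',\mathbf{w}}$ it builds the required cone directly from well-ordering and condition~\ref{item:cone}, and for $S(A)^-_{\bfgamma',\mathbf{w}}$ it runs an induction on the number $m$ of summands, at each stage constructing an intricate hierarchy of levels $\ell_0<\ell_1<\cdots$ whose termination requires a separate well-ordering argument.  Your approach replaces all of this with a contradiction: a failing sequence $(\mathbf{w}_k,\mathbf{s}_k)$ is forced, via conditions~\ref{item:cone} and~\ref{item:convergence} for $S(A)$, to satisfy $\mathbf{s}_k\to\bfgamma'$; then a single decomposition $\mathbf{s}_k=\sum_i\mathbf{a}_{k,i}$ with $\mathbf{a}_{k,i}\to\mathbf{a}_i^*$ and $\sum_i\mathbf{a}_i^*=\bfgamma'$ lets you invoke condition~\ref{item:plusminus} for $A$ at the finitely many points $\mathbf{a}_i^*$ and sum the resulting inequalities.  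As you note, this sidesteps the need to control all decompositions of all elements of $S(A)^-_{\bfgamma',\mathbf{w}}$ uniformly, which is precisely the source of the complexity in the paper's induction.  The trade-off is that your proof is non-constructive, whereas the paper's yields an explicit description of the cone $\sigma_{\bfgamma'}$.

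One phrasing issue worth tightening: your appeal to ``symmetry between the $+$ and $-$ sides'' is not quite accurate, since the hypothesis $\mathbf{w}\cdot\mathbf{a}\geq 0$ breaks the symmetry.  What actually happens is that after your subsequence extraction every $\mathbf{w}\cdot\mathbf{a}_{k,i}$ is non-decreasing toward $\mathbf{w}\cdot\mathbf{a}_i^*$, so $\mathbf{w}\cdot\mathbf{s}_k\leq\mathbf{w}\cdot\bfgamma'$ automatically; this already rules out the case $\mathbf{w}\cdot\mathbf{v}_k>0$, and the remaining case is exactly the one you treat.  Equivalently, your final summation shows $\mathbf{w}'\cdot(\mathbf{s}_k-\bfgamma')<0$ for every $\mathbf{w}'\in\bigcap_i\sigma_i$, in particular for both $\mathbf{w}$ and $\mathbf{w}_k$, so the signs agree regardless of which side the discrepancy was assumed to lie on.
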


\begin{proof}
 Fix $\bfgamma' \in \mathbb Q^d$.  It suffices to show there are open cones 
$\sigma_1$ and $\sigma_2$ containing $\vect w$ for which
$S(A)^+_{\bfgamma',\vect w}
\subseteq S(A)^+_{\bfgamma',\vect w'}$  for all 
$\vect w'\in \sigma_1\cap\R^d_{li}$  and 
$S(A)^-_{\bfgamma',\vect w} \subseteq S(A)^-_{\bfgamma',\vect w'}$   
for all
$\vect w'\in \sigma_2\cap \R^d_{li}$. 

We first show the existence of $\sigma_1$.  Since $\{ \mathbf{w} \cdot \vect a :\vect a\in S(A)^+_{\bfgamma', \vect w} \}$ 
 is
well ordered by \ref{item:perturbation} there is $\vect a \in
S(A)^+_{\bfgamma',\mathbf{w}}$ achieving the minimum value of
$\mathbf{w} \cdot \vect a$, so 
\[
\delta:= \min\{\mathbf{w} \cdot \vect a : \vect a \in
S(A)^+_{\bfgamma',\mathbf{w}}\} - \mathbf{w} \cdot \bfgamma'
\]
is positive.  By condition~\ref{item:cone}, we have $S(A) \subseteq
\bfgamma +C$ for a pointed cone $C$ with $\mathbf{w} \in
\intt(C^{\vee})$ and $\bfgamma \in \mathbb Q^d$.  If $\mathbf{w} \cdot
\bfgamma' \leq \mathbf{w} \cdot \bfgamma$ then we may take $\sigma_1$
to be the interior of $\{ \mathbf{x} : \mathbf{x} \cdot \bfgamma
\geq \mathbf{w}' \cdot \bfgamma' \} \cap C^{\vee}$.  We now suppose
that $\mathbf{w} \cdot \bfgamma' > \mathbf{w} \cdot \bfgamma$.  We may
assume that $C$ is full-dimensional here, so there is a rational point
$\tilde{\bfgamma}$ in $\bfgamma + C \cap \{ \mathbf{x} \in \mathbb R^d
\colon \mathbf{w} \cdot \bfgamma' <\mathbf{w} \cdot \mathbf{x} <
\mathbf{w} \cdot \bfgamma' + \delta\}$.  Let $C'$ be the closure of
the cone generated by $\{ \vect a-\tilde{\bfgamma} : \vect a \in
S(A)^+_{\bfgamma',\mathbf{w}} \}$.  The cone $C'$ is pointed, as it is
contained in the cone with vertex $\tilde{\bfgamma}$ over the
intersection $\bfgamma+ C \cap \{\vect x:\mathbf{w} \cdot \vect x =
\mathbf{w} \cdot \bfgamma' + \delta\}$, which is bounded.  It also
contains $\mathbf{w}$ in its dual.  Thus any open cone $\sigma_1$
containing $\mathbf{w}$ and contained in $\intt({C'}^{\vee}) \cap
\{\mathbf{x} \in \mathbb R^d \colon \vect x \cdot (\tilde{\bfgamma} -
\bfgamma') \geq 0 \}$ will have the property that
$S(A)^+_{\bfgamma',\vect w} \subseteq S(A)^+_{\bfgamma',\vect w'}$ for
all $\vect w'\in \sigma_1$.

We next show the existence of $\sigma_2$.

By Lemma~\ref{lem:bounded} there is a bound $M$ on the number of summands $m$ in any
element $\vect a \in S(A)^-_{\bfgamma',\mathbf{w}}$.
Let
\[
B_m = \left\{ (\vect{a_1},\dots,\vect a_m) \in A^m : \sum_{i=1}^m \vect a_i \in
S(A)^-_{\bfgamma',\mathbf{w}}\right\}.
\]
We show by induction on $m$ that there
is a cone $C_m$ with $\mathbf{w} \in \intt(C_m^{\vee})$ and
$\{ \sum_{i=1}^m \vect a_i \in S(A)^-_{\bfgamma',\vect w} \} \subseteq \bfgamma'-C_m$.  Any
$\sigma_2 \subseteq \intt(\bigcap_{m=1}^M C_m^{\vee})$ 
containing $\mathbf{w}$
will then have the property that $S(A)^-_{\bfgamma',\vect w} \subseteq
S(A)^-_{\bfgamma',\vect w'}$ for all $\vect w'\in \sigma_2$.

The base case is $m=1$, where the claim follows since $B_1 \subseteq
A$, so $B_1 \in \mathcal A_{\mathbf{w}}$, for which axiom~\ref{item:plusminus}
holds. 
Now suppose that $m>1$, and the claim is true for smaller $m$.
If 
\[
s:=\sup\left\{\mathbf{w} \cdot \sum_{i=1}^m \vect a_i : (\vect a_1,\ldots, \vect a_m) \in B_m\right\}
\]
is less than $\mathbf{w} \cdot \bfgamma'$,
then the closure $C_m$ of the cone generated by $
\{ \bfgamma'- \sum_{i=1}^m \vect a_i : (\vect a_1,\ldots, \vect a_m) \in
B_m \}$ is pointed and has the required form.   We may thus assume that
$s= \mathbf{w} \cdot \bfgamma'$.

For each $n>0$, the set
\[
\mathcal S^{(0)}_n= \left\{ \vect a_1 : (\vect a_1,\ldots, \vect a_m)\in B_m, \mathbf{w} \cdot \sum_{i=1}^m \vect a_i
> \mathbf{w} \cdot \bfgamma'-1/n \right\}
\]
is a subset of $A$, so by axiom
\ref{item:perturbation} there is some  $\vect s_n^{(0)} \in \mathcal S_n^{(0)}$ with $\mathbf{w} \cdot \vect s_n^{(0)}$
minimal.
We claim that the set $\{\mathbf{w} \cdot \vect s_n^{(0)} : n \geq 1\}$ is
weakly increasing, and bounded above by $\mathbf{w} \cdot
\bfgamma'$. The bound comes from the fact that $\mathbf{w} \cdot \vect
a_i \geq 0$ for all $\vect a_i$ by assumption, which implies that
$\mathbf{w} \cdot \vect s_n^{(0)} = \mathbf{w} \cdot \vect a_1 \leq
\mathbf{w} \cdot \sum_{i=1}^m \vect a_i < \mathbf{w} \cdot
\bfgamma'$. Thus the sequence $\{ \mathbf{w} \cdot \vect s_n^{(0)} \}$
converges to $\ell_0 \in \mathbb R$.
We now iterate, at each stage constructing sets
\[
\mathcal T_{j} = \{ (\vect a_1,\ldots, \vect a_m)\in  B_m  : \mathbf{w} \cdot \vect a_1> \ell_{j-1}\},
\]
and
\[
\mathcal S_n^{(j)} = \left\{ \vect a_1 : (\vect a_1,\ldots, \vect a_m)
\in \mathcal T_j, \mathbf{w} \cdot \sum_{i=1}^m \vect a_i > \mathbf{w} \cdot
\bfgamma'-1/n \right\}.
\]
with sequences $\{\vect s_n^{(j)}\}$, for which $\mathbf{w} \cdot
\mathbf{s}_n^{(j)}$ converges to $\ell_j$.  Note that $\ell_j > \ell_{j-1}$ by construction.

We claim that this process must terminate for some $j$, with $\sup\{
\mathbf{w} \cdot \sum_{i=1}^m \vect a_i : (\vect a_1,\ldots, \vect a_m) \in \mathcal T_j\} <
\mathbf{w} \cdot \bfgamma'$.  To see this, suppose that we can construct
an infinite sequence
\[ \ell_0 < \ell_1 < \ell_2 < \ell_3 < \dots \]
For each $j>1$, fix $0<\epsilon_j < \min\{( \ell_{j+1} - \ell_j)/2, 
(\ell_j-\ell_{j-1})/2 \}$.  Since for each $j$, the sequence
$\{\mathbf{w} \cdot \vect s_n^{(j)}\}$ converges to $\ell_j$ there is
$N_j>0$ for which $|\mathbf{w} \cdot \vect s_n^{(j)} - \ell_j|<
\epsilon_j$ for $n>N_j$.  Fix $n_j>N_j$ with $1/n_j<\epsilon_j$ and
pick $\vect c_j=(\vect a_1,\ldots, \vect a_m) \in
\mathcal T_j$
with $\vect a_1
=\vect s_{n_j}^{(j)}$.  Then $\mathbf{w} \cdot \sum_{i=1}^m \vect a_i >
\mathbf{w} \cdot \bfgamma' - \epsilon_j$.  Because $\sum_{i=1}^m \vect
a_i \in S(A)^-_{\bfgamma', \vect w}$, we also have the condition that
$\vect w\cdot \sum_{i=1}^m\vect a_i<\vect w\cdot \bfgamma'$. Subtracting
$\vect w\cdot \vect s_{n_j}^{(j)} = \vect w \cdot \vect a_1$ we get
\[
\mathbf{w} \cdot \bfgamma' - \epsilon_j - \mathbf{w} \cdot \vect
s_{n_j}^{(j)} < \sum_{i=2}^m \mathbf{w} \cdot \vect a_i < \mathbf{w}
\cdot \bfgamma' - \mathbf{w} \cdot \vect s_{n_j}^{(j)}.
\]
Hence, since $-\ell_j + \epsilon_j > -\mathbf{w} \cdot  \vect s_{n_j}^{(j)} \geq -\ell_j$ and by the choice of $\epsilon_j$, we have 
\[
\mathbf{w} \cdot \vect \bfgamma' - \ell_{j+1} < \mathbf{w}
\cdot \vect \bfgamma' - \ell_j -  \epsilon_j < \sum_{i=2}^m
\mathbf{w} \cdot \vect a_i < \mathbf{w} \cdot \bfgamma' - \ell_j+\epsilon_j< \mathbf{w} \cdot \bfgamma' - \ell_{j-1}.
\]
But this implies that the subset $\{ \sum_{i=2}^m \vect a_i : \vect
c_j=(\vect a_1,\ldots, \vect a_m) \}$ of $S(A)$ is not well ordered,
contradicting our assumption that condition \ref{item:perturbation}
holds. From this contradiction we conclude that the process
terminates, so there is $j$ for which $\sup\{ \mathbf{w} \cdot
\sum_{i=1}^m \vect a_i : (\vect a_1,\ldots, \vect a_m) \in \mathcal
T_j\} < \mathbf{w} \cdot \bfgamma'$.  Write $L$ for this $j$ and set
\[\epsilon:=\mathbf{w} \cdot \bfgamma' - \sup\left\{\mathbf{w}
\cdot \sum_{i=1}^m \vect a_i : (\vect a_1,\ldots, \vect a_m) \in \mathcal T_L\right\},
\]
which is positive.

For each $0 \leq j \leq L-1$, set 
\[
\mathcal T_j' = \left\{  (\vect a_1,\ldots, \vect a_m) \in B_m: \mathbf{w}
\cdot \sum_{i=2}^m \vect a_i > \mathbf{w} \cdot \bfgamma' - \ell_{j} \right\}.
\]
The set $\{ \mathbf{w} \cdot \sum_{i=2}^m \vect a_i : (\vect a_1,\ldots, \vect a_m) \in \mathcal T_j' \}$ is well ordered by \ref{item:perturbation}, so 
\[
\mu_j :=
\min\left\{\mathbf{w}~\cdot~\sum_{i=2}^m \vect a_i - \mathbf{w} \cdot \bfgamma'
+\ell_{j} : (\vect a_1,\ldots, \vect a_m) \in \mathcal T'_j\right\}
\] 
exists and is positive.  Choose $n \in \mathbb N$ with
$1/n<\min_j(\mu_j,\epsilon)$, and $n'>n$ with $\ell_{j}-\mathbf{w} \cdot \vect s_{n'}^{(j)}<1/n$ for all $0 \leq j <L$.

Set $\mathcal T_0 = B_m$,
and fix $0 \leq j < L$.  Since $\ell_{j}$ is the limit of the sequence $\{
\mathbf{w} \cdot \mathbf{s}_n^{(j)} \}$, for each $j$, by condition
\ref{item:convergence} we have $\ell_{j} = \mathbf{w} \cdot
\tilde{\bfgamma}_j$ for $\tilde{\bfgamma}_j \in \mathbb Q^d$.  We
first consider the case that $(\vect a_1,\dots,\vect a_m) \in \mathcal
T_j \setminus \mathcal T_{j+1}$ satisfies
\begin{equation}\label{eq:*}
  \vect w \cdot \sum_{i=1}^m \vect a_i > \vect w \cdot \bfgamma' - 1/n'.
\end{equation}
In that case $\mathbf{w} \cdot \vect a_1 \geq \mathbf{w} \cdot \vect
s_{n'}^{(j)} > \ell_{j} - 1/n$.  Thus $\mathbf{w} \cdot \sum_{i=2}^m
\vect a_i < \mathbf{w} \cdot \bfgamma' - \ell_{j}+1/n < \mathbf{w}
\cdot \bfgamma'-\ell_{j}+\mu_j$, so by the definition of $\mu_j$ we have
$\mathbf{w} \cdot \sum_{i=2}^m \vect a_i \leq \mathbf{w} \cdot \bfgamma'
- \ell_{j}$, and thus $\mathbf{w} \cdot \sum_{i=2}^m \vect a_i \leq
\mathbf{w} \cdot \bfgamma' - \mathbf{w} \cdot \tilde{\bfgamma}_j$.

By induction there is a cone $C_{j,m-1}$ with $\mathbf{w} \in
\intt(C_{j,m-1}^{\vee})$ and
\[
\left\{\sum_{i=2}^m \vect a_i : (\vect a_1,\ldots, \vect a_m) \in B_m,
\mathbf{w} \cdot \sum_{i=2}^m \vect a_i \leq \mathbf{w} \cdot
\bfgamma'-\mathbf{w} \cdot \tilde{\bfgamma}_j \right\} \subseteq \bfgamma'-
\tilde{\bfgamma}_j -C_{j,m-1}.
\] 
By \ref{item:plusminus} for $A$ there is an open cone $C'_j$ with
$\mathbf{w} \in \intt({C'_j}^{\vee})$ and $\{\vect a_1 : \mathbf{w}
\cdot \vect a_1 \leq \ell_{j} = \mathbf{w} \cdot \tilde{\bfgamma}_j \}
\subseteq \tilde{\bfgamma}_j -C'_j$.  Thus if $(\vect a_1,\dots, \vect
a_m) \in \mathcal T_j \setminus \mathcal T_{j+1}$ with $\vect w \cdot
\sum_{i=1}^m \vect a_i > \vect w \cdot \bfgamma' - 1/n'$
then
$$ \sum_{i=1}^m \vect a_i \in \bfgamma' - (C_j'+C_{j,m-1}).$$ Let
$C''$ be the Minkowski sum $\sum_{j=0}^{L-1} (C_j'+C_{j,m-1})$.  Note
that for $(\vect a_1,\dots,\vect a_m) \in \mathcal T_L$ we have
$\mathbf{w} \cdot \sum_{i=1}^m \vect a_i < \mathbf{w} \cdot \bfgamma'
- 1/n'$.  Thus for $(\vect a_1,\dots,\vect a_m) \in B_m$ with
$\mathbf{w} \cdot \sum_{i=1}^m \vect a_i > \mathbf{w} \cdot \bfgamma'
- 1/n'$ we have
$$ \sum_{i=1}^m \vect a_i \in \bfgamma'  - C''.$$
The closure of the cone $C'''$ generated by
\[
\left\{ \bfgamma' - \sum_{i=1}^m \vect a_i : (\vect a_1,\ldots, \vect a_m) \in B_m, \mathbf{w} \cdot
\sum_{i=1}^m \vect a_i \leq \mathbf{w} \cdot \bfgamma'-1/n' \right\}
\] 
is pointed,
$\mathbf{w} \in \intt({C'''}^{\vee})$, and $\{ \sum_{i=1}^m \vect a_i : (\vect a_1,\ldots, \vect a_m)
\in B_m, \mathbf{w} \cdot \sum_{i=1}^m \vect a_i \leq \mathbf{w} \cdot
\bfgamma'-1/n' \} \subseteq \bfgamma' - C'''$.  

Finally, let $C_m = C'' +C'''$.  Then $\mathbf{w} \in
\intt(C_m^{\vee})$, and for all $(\vect a_1,\ldots, \vect a_m) \in
B_m$ with $\mathbf{w} \cdot \sum_{i=1}^m \vect a_i < \mathbf{w} \cdot
\bfgamma'$, we have $\sum_{i=1}^m \vect a_i \in \bfgamma' - C_m$ as
required.
\end{proof}

\begin{theorem}
\label{thm:fieldfamily}
The set $\mathcal A_{\mathbf{w}}$ 
is
a field family with respect to $(\mathbb Q^d,\preceq_{\mathbf{w}})$. 
\end{theorem}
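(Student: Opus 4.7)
The plan is to verify in turn each of the six axioms of Definition~\ref{dfn: fieldfamily} for $\mathcal A_{\vect w}$.  Axioms (i), (ii), (iv), and (v) are essentially immediate.  For (i), apply condition~\ref{item:perturbation} of $p$-discreteness at $\vect w'=\vect w$ to well-order the image $\{\vect w\cdot \vect a : \vect a\in A\}$ in $\R$; since $\vect w\in\R^d_{li}$, the map $\vect a\mapsto \vect w\cdot \vect a$ is injective on $\Q^d$, so this lifts to well-ordering of $A$ itself under $\preceq_{\vect w}$.  For (ii), every singleton $\{\vect q\}\subset\Q^d$ is in $\mathcal A_{\vect w}$ (take $\sigma=\R^d$, $\bfgamma=\vect q$, $C=\{0\}$, and $N$ with $\vect q\in\tfrac1N\Z^d$), and these singletons generate $\Q^d$.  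Axioms (iv) and (v) hold because each of the four conditions of $p$-discreteness is preserved under restriction to a subset and under translation of $A$ by any element of $\Q^d$.

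For axiom (iii), given $A,B\in\mathcal A_{\vect w}$ I verify the four conditions for $A\cup B$.  Intersecting the open cones from~\ref{item:perturbation} for $A$ and $B$, and noting that a union of two well-ordered subsets of $\R$ is well-ordered, yields~\ref{item:perturbation}.  For~\ref{item:cone} take $N=\operatorname{lcm}(N_A,N_B)$ and a common pointed cone $C$ containing both shifts $\bfgamma_A+C_A$ and $\bfgamma_B+C_B$; such a cone can be constructed while preserving $\vect w\in\intt(C^\vee)$ because $\vect w$ lies in the interior of both $C_A^\vee$ and $C_B^\vee$.  Condition~\ref{item:convergence} follows by extracting a subsequence that lies entirely in one of $A$ or $B$, and condition~\ref{item:plusminus} follows by intersecting, for each $\bfgamma'$, the open cones $\sigma_{\bfgamma'}$ arising from $A$ and from $B$.

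The crux is axiom (vi).  Here Lemma~\ref{condition d} reduces the task to verifying~\ref{item:perturbation}, \ref{item:cone}, and~\ref{item:convergence} for $S(A)$ under the hypothesis $\vect w\cdot \vect a\geq 0$ for all $\vect a\in A$.  For~\ref{item:cone}, every $\vect s=\sum_{i=1}^m \vect a_i\in S(A)$ lies in $m\bfgamma+C$, so $S(A)\subseteq \R_{\geq 0}\bfgamma+C$; after arranging $\vect w\cdot\bfgamma>0$ by replacing $\bfgamma$ with a suitable element of $\bfgamma+C$ and enlarging $C$ accordingly, this Minkowski sum is a pointed cone with $\vect w$ in the interior of its dual, and the lattice constraint $\bigcup_j \tfrac1{Np^j}\Z^d$ is closed under addition.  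For~\ref{item:convergence}, given $\{\vect s_i\}\subset S(A)$ with $\vect w\cdot \vect s_i$ convergent, Lemma~\ref{lem:bounded} bounds the number of summands, so passing to a subsequence we may write $\vect s_i=\sum_{j=1}^m \vect a_{ij}$ with a fixed $m$; using $\vect w\cdot \vect a_{ij}\geq 0$ together with convergence of the row sums, a diagonal subsequence makes each $\vect w\cdot \vect a_{ij}$ converge, so condition~\ref{item:convergence} for $A$ produces limits $\vect a_{\infty,j}\in\Q^d$ whose sum is the required limit in $\Q^d$.

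The main obstacle is~\ref{item:perturbation} for $S(A)$: producing an open cone around $\vect w$ on which $\{\vect w'\cdot \vect s : \vect s\in S(A)\}$ is well-ordered in $\R$.  I would choose $\vect w'$ in the intersection of the open cone $\sigma$ from~\ref{item:perturbation} for $A$ with the open cone $\sigma_{\vect 0}$ from~\ref{item:plusminus} for $A$ at $\bfgamma'=\vect 0$.  On this intersection the set $\{\vect w'\cdot \vect a:\vect a\in A\}$ remains well-ordered, and because $A^-_{\vect 0,\vect w'}=A^-_{\vect 0,\vect w}=\emptyset$ it consists of nonnegative real numbers.  The well-ordering of the semigroup values $\{\vect w'\cdot \vect s\}$ then follows from Neumann's classical result that the set of finite sums of a well-ordered subset of $[0,\infty)$ is well-ordered; the at most one element $\vect a_0\in A$ with $\vect w'\cdot \vect a_0=0$ can arise only from $\vect a_0=\bfgamma$, and it merely contributes a repeated value $0$ to the image set.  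Uniformity of the cone in $\vect w'$ is inherited directly from the uniformity in~\ref{item:perturbation} and~\ref{item:plusminus} for $A$.
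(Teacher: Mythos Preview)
Your approach matches the paper's almost exactly: the same easy verification of axioms (i), (ii), (iv), (v); the same intersection-of-cones argument for axiom (iii); and for axiom (vi) the same reduction via Lemma~\ref{condition d}, the same use of $\sigma\cap\sigma_{\vect 0}$ together with Neumann's theorem for condition~\ref{item:perturbation}, and the same invocation of Lemma~\ref{lem:bounded} for condition~\ref{item:convergence}.

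There is, however, a recurring gap in your treatment of condition~\ref{item:convergence}, both for $A\cup B$ in axiom (iii) and for $S(A)$ in axiom (vi). Condition~\ref{item:convergence} demands that the \emph{entire} sequence $\{\vect s_i\}$ converge in $\Q^d$, not merely some subsequence. In axiom (iii) you write that the claim ``follows by extracting a subsequence that lies entirely in one of $A$ or $B$''; in axiom (vi) your diagonal argument likewise only produces a convergent subsequence. To close the gap you need one more step, which the paper carries out explicitly: any subsequential limit $\vect s\in\Q^d$ satisfies $\vect w\cdot \vect s = L$, and since $\vect w\in\R^d_{li}$ this determines $\vect s$ uniquely among rational points; hence every subsequence has a further subsequence converging to the \emph{same} $\vect s$, and therefore the full sequence converges to $\vect s$. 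In the $A\cup B$ case this is exactly how the paper reconciles the limit coming from the $A$-subsequence with the one coming from the $B$-subsequence.

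Two minor points. First, in your discussion of condition~\ref{item:perturbation} for $S(A)$, the remark that the unique $\vect a_0\in A$ with $\vect w'\cdot\vect a_0=0$ ``can arise only from $\vect a_0=\bfgamma$'' should read $\vect a_0=\vect 0$ (for $\vect w'\in\R^d_{li}$); in any case Neumann's theorem already handles nonnegative (not strictly positive) generators, so the aside is unnecessary. Second, your sketch of condition~\ref{item:cone} for $S(A)$ is a bit loose about arranging $\vect w\cdot\bfgamma>0$ while keeping $A\subseteq\bfgamma+C$; the paper sidesteps this by citing \cite{Saavedra}*{Proposition~5.1}, and you may prefer to do the same.
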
  

\begin{proof}
  We check each of the six axioms of a field family given in Definition~\ref{dfn: fieldfamily}.

\emph{Axiom (i)}:
  Each $A \in \mathcal A_{\mathbf{w}}$ is well ordered by condition~\ref{item:perturbation} of Definition~\ref{dfn: p-discrete}.

\emph{Axiom (ii)}:
  Every $\bfgamma \in \mathbb Q^d$ is in $\mathcal A_{\mathbf{w}}$.

\emph{Axiom (iii)}:
  Fix $A, B \in \mathcal A_{\mathbf{w}}$.  Condition~\ref{item:cone}
  of $p$-discreteness holds for $A \cup B$ by
  \cite{Saavedra}*{Proposition 5.1}.
  Let $\sigma_A, \sigma_B$ be the respective open cones containing $\vect w$ guaranteed by condition~\ref{item:perturbation} of $p$-discreteness for $A$ and $B$.  The open cone $\sigma = \sigma_A
  \cap \sigma_B$ makes condition~\ref{item:perturbation} hold for $A\cup B$.  Similarly, for condition~\ref{item:plusminus} we may use the intersection of the respective guaranteed open cones for $A$ and $B$.  

For \ref{item:convergence}, suppose $\{\vect c_i\}$ is a sequence in $A \cup B$
  with $\{\mathbf{w} \cdot \vect c_i\}$ converging to $L \in \R$.  Let $\{\vect a_j\}$ and $\{\vect b_j\}$
  be the subsequences consisting of points in $A$ and in $B$, respectively.  If one of these subsequences is finite, then $\{\vect c_i\}$ converges
  to the limit of the other sequence.   Otherwise, $\{\mathbf{w}
  \cdot \vect a_j\}$ and $\{\mathbf{w} \cdot \vect b_j\}$ also converge to $L$, so $\{\vect a_j\}$
  converges to $\vect a$ and $\{\vect b_j\}$ converges to $\vect b$ with $\vect a,\vect b \in \mathbb
  Q^d$.  We have $\mathbf{w} \cdot \vect a
  = \mathbf{w} \cdot \vect b = L$, so $\mathbf{w} \cdot (\vect a-\vect b) = 0$.  Since
  $\mathbf{w} \in \mathbb R^d_{li}$, this implies that $\vect a-\vect b = 0$, so
  $\vect a=\vect b$, and $\{\vect c_i\}$ converges to the point $\vect a=\vect b$ in $\mathbb Q^d$ as required.

\emph{Axiom (iv)}:
 All conditions of $p$-discreteness are inherited from $A$ by $B\subseteq A$. 

 \emph{Axiom (v)}: All conditions for a set $A\subseteq \Q^d$ to be $p$-discrete are invariant under translation by a point in $\mathbb Q^d$.

\emph{Axiom (vi)}:
  Fix $A \in \mathcal A_{\mathbf{w}}$ with $\mathbf{w} \cdot \vect a
  \geq 0$ for all $\vect a \in A$.  Let $S(A)$ be the semigroup
  generated by $A$ under addition.  Let $\sigma$ be as in
  condition~\ref{item:perturbation} of $p$-discreteness for $A$, and let $\sigma_{\mathbf{0}}$ be as in condition~\ref{item:plusminus} for $\mathbf{0}$.  For
  any $\mathbf{w}' \in \sigma \cap \sigma_{\mathbf{0}}$ we have $A^+_{0,\mathbf{w}'} = A^+_{0,\vect
    w} = A$, since $\mathbf{w} \cdot \mathbf{a} \geq 0$ for all
  $\mathbf{a} \in A$,
  and $A$ is $p$-discrete.  Thus the fact that $S(A)$ is
  well ordered with respect to $\mathbf{w}'$ follows from 
\cite{Neumann}*{Theorem 3.4}. Condition~\ref{item:cone} for $S(A)$ follows as in \cite{Saavedra}*{Proposition 5.1}. 

We now prove condition~\ref{item:convergence}  holds. 
Let $\{\vect s_i\}$ be a sequence in $S(A)$ such that $\{\vect w \cdot \vect s_i\}$ converges to some $L \in \mathbb{R}$. 
We want to show that $\{\vect s_i\}$  converges to some $\vect s \in \mathbb{Q}^d$. 
Pick $\bfgamma \in \mathbb Q^d$ such that $\vect w \cdot \bfgamma > L$.
There exists an integer $N$ such that for all $i>N$, we have $|\vect w\cdot \vect s_{i}-L|<\vect w\cdot \bfgamma-L$. Consequently, for all $i>N$ we have $\vect s_{i}\in S(A)^-_{\bfgamma, \vect w}$. 
When $\vect s_i\neq \vect 0$, Lemma~\ref{lem:bounded} says that the number $m$ of summands of  $\vect s_{i} = \sum_{j=1}^m \vect a^{(j)}_i$ is bounded by some $M>0$. When $\vect s_i = \vect 0$, the assumption that $ \vect w\cdot \vect a\geq 0$ for all $\vect a\in A$ implies that $\vect s_i =  \sum_{j=1}^m \vect a^{(j)}_i$ has just one summand, namely $\vect 0\in A$. 
Thus by passing to a subsequence $\{\vect s_{n(i)} \}$ of $\{\vect s_{i}\}$ 
we may assume that each $\vect s_{n(i)}$ is a sum of exactly $m$ terms $\vect s_{n(i)} =
\sum_{j=1}^m \vect a_{i}^{(j)}$ for some $m \leq M$, and the sequence
$\{\mathbf{w} \cdot \vect a_{i}^{(j)}\}_i$ is weakly increasing for each $1 \leq j \leq
m$.  By hypothesis we have 
$0 \leq \mathbf{w} \cdot \vect a_{i}^{(j)} < \mathbf{w} \cdot \bfgamma$. 
Hence for each fixed
$1 \leq j \leq m$ the weakly increasing and bounded sequence $\{\mathbf{w} \cdot \vect a_{i}^{(j)}\}_i$ converges. 
Since $A$ is $p$-discrete, each $\{\vect a_{i}^{(j)}\}_i$ converges to some $\vect a^{(j)}\in \Q^d$, so $\{\vect s_{n(i)}\}$ 
converges to $\vect s= \sum_{j=1}^m \vect a^{(j)} \in \mathbb Q^d$.

So far we have shown that $\{\vect s_i\}$ has a subsequence converging to $\vect s\in \mathbb{Q}^d$. We claim that the entire sequence $\{\vect s_i\}$ also converges to $\vect s$.  Suppose there exists an $\epsilon>0$ for which for all $n\in \mathbb{N}$, there is an index $i_n>n$ such that $|\vect s_{i_n}-\vect s|>\epsilon$. 
Note that $\{\mathbf{w}\cdot \vect s_{i_n}\}_n$  also converges to $L$. Hence, by applying the same argument as above, we deduce that the sequence $\{\vect s_{i_n}\}_n$ itself has a convergent subsequence to $\vect s$, contradicting the $\epsilon$ distance of the $\vect s_{i_n}$ away from $\vect s$.  

Condition~\ref{item:plusminus} of $p$-discreteness now holds by Lemma~\ref{condition d}, which completes the proof. \qedhere
\end{proof}

\begin{dfn}\label{dfn:K_w}
Let $\kk$ be an algebraically closed field of characteristic $p>0$ and
let $\vect w\in\R^d_{li}$. We denote by $K_{\vect w}$ the field
$\kk^{\Q^d}(\mathcal A_{\vect w})$ of $p$-discrete series in direction
$\mathbf{w}$ with coefficients in $\kk$, with variables
$t_1,\dots,t_d$.  Specifically, the elements of $K_{\vect w}$ are
formal power series in variables $t_1,\dots t_d$, with exponents in
the field family $\mathcal A_{\vect w}$ and coefficients in $\kk$.
\end{dfn}

\section{The field of p-discrete series is algebraically closed}
\label{sec:algebraicallyClosed}

In this section, we prove that $\ourK$  is algebraically closed. We begin with some preliminary results.

\begin{lemma}\label{lem:algClosedbounded}
  Let $A\in \mathcal{A}_{\bf w}$, and assume that $\vect w\cdot \vect a <0$ for all $\vect a\in A$. Then there is an $M>0$ such that $|\vect a|
  \leq M$ for all $\vect a\in A$. That is, $A$ is bounded.
\end{lemma}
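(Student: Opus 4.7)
The plan is to exploit condition~\ref{item:cone} of $p$-discreteness to confine $A$ in a translate of a pointed cone on which $\vect w$ is non-negative, and then invoke the standard convex-geometry fact that a slice of such a cone by the half-space $\{\vect w\cdot \vect x \leq c\}$ is bounded.

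First I would apply condition~\ref{item:cone} of Definition~\ref{dfn: p-discrete} to obtain $\bfgamma \in \Q^d$ and a pointed cone $C$ with $\vect w \in \intt(C^\vee)$ such that $A \subseteq \bfgamma + C$ (the $p$-power lattice factor plays no role here). After replacing $C$ by its closure (which is still pointed and still has $\vect w$ in the interior of its dual), each $\vect a \in A$ may be written as $\vect a = \bfgamma + \vect c$ with $\vect c \in C$, and the hypothesis $\vect w \cdot \vect a < 0$ becomes $\vect w \cdot \vect c < -\vect w \cdot \bfgamma$. Since $\vect w \in C^\vee$, every $\vect c \in C$ automatically satisfies $\vect w \cdot \vect c \geq 0$; hence either $A = \emptyset$ (if $\vect w \cdot \bfgamma \geq 0$) and there is nothing to prove, or $0 \leq \vect w \cdot \vect c < -\vect w \cdot \bfgamma$.

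The key remaining ingredient is the standard fact that for a closed pointed cone $C$ with $\vect w \in \intt(C^\vee)$, the slice $B := \{\vect x \in C : \vect w \cdot \vect x = 1\}$ is a compact base of $C$, so every nonzero $\vect x \in C$ admits the unique presentation $\vect x = (\vect w \cdot \vect x)\,\vect b$ with $\vect b \in B$. Setting $R := \sup_{\vect b \in B}|\vect b| < \infty$, this gives $|\vect c| \leq R\,(\vect w \cdot \vect c) < R(-\vect w \cdot \bfgamma)$ for each $\vect c = \vect a - \bfgamma$, so taking $M := |\bfgamma| + R(-\vect w \cdot \bfgamma)$ yields the uniform bound $|\vect a| \leq M$ for all $\vect a \in A$. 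The only technical point is the compact-base fact, which is standard; none of the other $p$-discreteness conditions are needed, and I do not anticipate any real obstacle.
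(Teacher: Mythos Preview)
Your proof is correct and follows essentially the same approach as the paper: both use condition~\ref{item:cone} to place $A$ inside $\bfgamma+C$ with $\vect w\in\intt(C^\vee)$, then observe that intersecting with the half-space $\{\vect w\cdot\vect x\le 0\}$ yields a bounded set. The paper phrases the last step as ``$(\bfgamma+C)\cap\{\vect x:\vect w\cdot\vect x\le 0\}$ is a polytope since its facet normals span $\R^d$,'' whereas you give the equivalent compact-base argument; both are standard and amount to the same convex-geometry fact.
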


\begin{proof}
Since $A \in \mathcal A_{\bf w}$, the set $A$ is contained in the
polyhedron $\bfgamma + C$ for some $\bfgamma \in \mathbb Q^d$ and  pointed
 cone $C$ with $\mathbf{w} \in \intt(C^{\vee})$.  The polyhedron $(\bfgamma + C) \cap \{ \mathbf{x} : \mathbf{w} \cdot \mathbf{x} \leq 0 \}$ is a polytope, as its facet normals span $\mathbb R^d$, so it is bounded, and thus $A$ is also bounded.
\end{proof}

\begin{lemma}\label{prop:algClosedpDiscrete}
Let $A\in \mathcal{A}_{\bf w}$, and assume that $\vect w \cdot \vect a<0$ for all $\vect a\in A$. Then $S := \bigcup_{i=1}^{\infty} p^{-i}A$ is also in the field family  $\mathcal{A}_{\bf w}$. 
\end{lemma}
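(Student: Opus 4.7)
The plan is to verify each of the four $p$-discreteness conditions of Definition~\ref{dfn: p-discrete} for $S$. Fix $\bfgamma \in \Q^d$, $N > 0$, and a pointed (full-dimensional) cone $C$ with $\vect w \in \intt(C^\vee)$ witnessing condition~\ref{item:cone} for $A$; note that $\vect w \cdot \bfgamma < 0$ since $A \subseteq \bfgamma + C$ and $\vect w \cdot A < 0$. The guiding principle is that by Lemma~\ref{lem:algClosedbounded}, $A$ is bounded, so the sets $p^{-i}A$ shrink geometrically in both diameter and $\vect w$-pairing, making the infinite union $S$ tractable uniformly across all $i \geq 1$.

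For condition~\ref{item:cone}, write $p^{-i}\bfgamma = \tfrac{1}{p}\bfgamma + (\tfrac{1}{p} - \tfrac{1}{p^i})(-\bfgamma)$ for $i \geq 1$; the second summand lies in $\R_{\geq 0}(-\bfgamma)$, so $S \subseteq \tfrac{1}{p}\bfgamma + C'$, where $C' := C + \R_{\geq 0}(-\bfgamma)$. A short pairing argument with $\vect w$ shows $C'$ is pointed and $\vect w \in \intt((C')^\vee)$: if $\vect v, -\vect v \in C'$, then $\vect c_1 + \vect c_2 = (\lambda_1 + \lambda_2)\bfgamma$ for some $\vect c_i \in C$, $\lambda_i \geq 0$, and pairing with $\vect w$ gives LHS $\geq 0$ and RHS $\leq 0$, forcing everything to vanish. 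The lattice containment is inherited from $A$ since $p^{-i}A \subseteq \bigcup_{j \geq 0} \tfrac{1}{N p^{i+j}}\Z^d$. For condition~\ref{item:convergence}, $S$ is bounded, so any sequence $\{\vect s_n\}\subseteq S$ with $\vect w \cdot \vect s_n \to L$ has convergent subsequences; writing $\vect s_n = p^{-i_n}\vect a_n$, either $i_n$ is bounded along a subsequence (pigeonhole to constant $i$, then apply condition~\ref{item:convergence} for $A$ to $\vect a_n = p^{i}\vect s_n$) or $i_n \to \infty$ along a subsequence (then $\vect s_n \to \vect 0$). Any two subsequential limits $\vect s_1^*, \vect s_2^* \in \Q^d$ satisfy $\vect w \cdot (\vect s_1^* - \vect s_2^*) = 0$ with $\vect w \in \R^d_{li}$, so they coincide, which forces convergence of the full sequence.

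For condition~\ref{item:perturbation}, take $\sigma := \sigma_A \cap \sigma^A_{\vect 0}$, where $\sigma_A$ is the cone from condition~\ref{item:perturbation} for $A$ and $\sigma^A_{\vect 0}$ is from condition~\ref{item:plusminus} for $A$ at $\bfgamma' = \vect 0$. For $\vect w' \in \sigma \cap \R^d_{li}$, we have $\vect w' \cdot A < 0$ and $A$ is well ordered under $\preceq_{\vect w'}$. Given nonempty $T \subseteq S$, cover $T = \bigcup_{i \geq 1} T_i$ with $T_i := T \cap p^{-i}A$; each nonempty $T_i$ has a $\preceq_{\vect w'}$-minimum $m_i$ via the minimum of $p^i T_i \subseteq A$, with $\vect w' \cdot m_i \in [-M_{\vect w'}/p^i,\, 0)$, where $M_{\vect w'}$ is an upper bound for $|\vect w' \cdot \vect a|$ over $\vect a \in A$. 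Fixing any $i_0$ with $T_{i_0} \neq \emptyset$, the inequality $\vect w' \cdot m_i \leq \vect w' \cdot m_{i_0}$ forces $p^i \leq M_{\vect w'}/|\vect w' \cdot m_{i_0}|$, so only finitely many $i$ qualify and among them the $\preceq_{\vect w'}$-minimum of the $m_i$ exists, giving $\min T$.

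I expect the main obstacle to be condition~\ref{item:plusminus}, which requires simultaneous control of infinitely many scaled thresholds. The key identity is
\[
S^{\pm}_{\bfgamma', \vect w} \;=\; \bigcup_{i \geq 1} p^{-i}\, A^{\pm}_{p^i \bfgamma',\, \vect w},
\]
obtained from $\vect w \cdot (p^{-i}\vect a) \gtrless \vect w \cdot \bfgamma' \iff \vect w \cdot \vect a \gtrless p^i(\vect w \cdot \bfgamma')$. When $\vect w \cdot \bfgamma' \geq 0$, a small cone on which $\vect w' \cdot S < 0 \leq \vect w' \cdot \bfgamma'$ does the job. When $\vect w \cdot \bfgamma' < 0$, let $M$ be an upper bound for $|\vect w \cdot \vect a|$ over $\vect a \in A$; for every $i$ with $p^i|\vect w \cdot \bfgamma'| > M$, i.e., all but finitely many $i$, one has $A^+_{p^i\bfgamma', \vect w} = A$ and $A^-_{p^i\bfgamma', \vect w} = \emptyset$, and this trivial behaviour is preserved uniformly for $\vect w'$ near $\vect w$ by continuity of the pairing (the binding case is $i = i_0 + 1$, since larger $i$ give more slack). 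For each of the finitely many remaining indices $i \in \{1,\dots, i_0\}$, invoke condition~\ref{item:plusminus} for $A$ at $p^i\bfgamma'$ to obtain an open cone $\sigma^A_{p^i\bfgamma'}$; the intersection of these finitely many cones with the continuity cone yields the required $\sigma_{\bfgamma'}$, completing the verification.
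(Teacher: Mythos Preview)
Your proof is correct and follows essentially the same approach as the paper's: both verify conditions~\ref{item:perturbation}--\ref{item:plusminus} directly, using the same intersection $\sigma_A \cap \sigma^A_{\vect 0}$ for \ref{item:perturbation}, the same enlarged cone $C + \R_{\geq 0}(-\bfgamma)$ for \ref{item:cone}, the same bounded/unbounded-$i$ dichotomy for \ref{item:convergence}, and the same case split on the sign of $\vect w \cdot \bfgamma'$ together with the finite intersection $\bigcap \sigma^A_{p^i\bfgamma'}$ for \ref{item:plusminus}. The only notable variation is that for large $i$ in the $\vect w \cdot \bfgamma' < 0$ case you invoke boundedness of $A$ and continuity of the pairing directly, whereas the paper instead appeals to condition~\ref{item:plusminus} for $A$ at the threshold index $p^{i_0}\bfgamma'$; both arguments are valid and equally short.
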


\begin{proof}
 We check that $S$ satisfies the conditions of Definition~\ref{dfn: p-discrete}.
  
\noindent \emph{Condition~\ref{item:perturbation}:} We use similar ideas to \cite[Lemma 5.2]{Saavedra}. 

Since condition ~\ref{item:perturbation} holds for $A$, there exists an open  cone $\sigma$ containing $\mathbf{w}$ for which $Q_{i, \vect w'}:= \{\mathbf{w}' \cdot(p^{-i}\mathbf{a}): p^{-i}\mathbf{a} \in p^{-i}A \}$ is well ordered for any $i\in \mathbb{Z}_{\geq 0}$ and ${\bf w}'\in \sigma\cap \mathbb{R}^d_{li}$. Since condition~\ref{item:plusminus} holds for $A$, there is an open cone $\sigma_{\vect 0}$ such that $A^-_{\vect 0, \vect w} = \{\vect a\in A : \vect w\cdot \vect a < 0 \}$ is equal to $A^-_{\vect 0, \vect z}$ for any $\vect z\in \sigma_{\vect 0}\cap \mathbb{R}^d_{li}$. Since $A= A^-_{\vect 0, \vect w}$ by assumption, we have that $\vect a\cdot \vect z < 0$ for any $\vect a\in A$ and any $\vect z\in \sigma_{\vect 0}$. 
Set $V = \sigma\cap \sigma_{\vect 0}$.

Now, let ${\vect w'}\in V\cap \mathbb{R}^d_{li}$ and assume that there exists an infinite strictly decreasing sequence $T:=\{\vect w'\cdot (p^{-k_j}\vect a_j)\}_{j}$, where each ${\vect a_j}\in A$, and each $k_j>0$.  Then there must be infinitely many distinct integers $k_j$ in this sequence, or else some $Q_{i, \vect w'}$ would not be well ordered.  So, there is a strictly increasing subsequence $\{k_{n(j)}\}_{j}$ of the sequence $\{k_j\}_{j}$. Consider the associated subsequence of $T$:
\begin{equation}
\label{eqn:pjvsequence}
 \vect w' \cdot (p^{-k_{n(0)}}\vect a_{n(0)}) >  \vect w' \cdot(p^{- k_{n(1)}}\vect a_{n(1)}) > \cdots >  \vect w' \cdot(p^{-k_{n(j)}}\vect a_{n(j)}) > \cdots.
\end{equation}
Since the set $\{\vect w'\cdot\vect a_{n(j)}\}_{j}$ is well ordered, as $\vect w'\in V\cap \mathbb{R}^d_{li}$, it has a smallest element.  In particular, there must exist indices $s<t$ with $\vect w' \cdot\vect a_{n(s)} \leq \vect w' \cdot\vect a_{n(t)}$. Finally $\vect w' \cdot\vect a_{n(t)}$ is negative, as $\vect w'\in \sigma_{\vect 0}\cap \mathbb{R}^d_{li}$, and $k_{n(s)}< k_{n(t)}$, so we have 
\[
\vect w' \cdot(p^{-k_{n(s)}}\vect a_{n(s)})  \leq \vect w' \cdot(p^{-k_{n(s)}}\vect a_{n(t)}) < \vect w' \cdot(p^{-k_{n(t)}}\vect a_{n(t)}),
\]
which contradicts \eqref{eqn:pjvsequence}. Hence there is no infinite strictly decreasing sequence $T$, and the set $\{\vect w' \cdot\vect s: \vect s \in S\}$ is well ordered. 

\noindent \emph{Condition~\ref{item:cone}:} Since $A\in
\mathcal{A}_{\bf w}$, there is $N>0$, $\bfgamma \in \mathbb Q^d$, and
a pointed cone $C$ such that $\mathbf{w} \in
\intt(C^{\vee})$ and $A$ is contained in the intersection
  $$ (\bfgamma+C) \cap \left(\bigcup_{j \geq 0} \frac{1}{Np^j} \mathbb Z^d\right).$$
Since $\vect w\cdot \vect a<0$ for $\vect w \in A$, we must have that $\vect w \cdot \bfgamma<0$.
Let $C_1$ be the convex hull of $C$ and the ray spanned by $-\bfgamma$.
This is a pointed cone, and $\vect w \in C_1^{\vee}$
because $\vect w \in C^{\vee}$
and $-\vect w \cdot \bfgamma > 0$.
For $\vect a \in A$, since $\vect a - \bfgamma\in C$, we have that
$p^{-i}\vect a - p^{-i}\bfgamma\in C$, and that $(p^{-i}\vect a -
p^{-i} \bfgamma) + (1-p^{-i})(-\bfgamma) = p^{-i}\vect a - \bfgamma
\in C_1.$ Thus, $p^{-i}\vect a\in \bfgamma + C_1$, and we conclude
that $p^{-i}A$ is contained in
\begin{equation}\label{cond(b)TechnicalLemma}
  (\bfgamma+C_1) \cap\left(\bigcup_{j \geq 0} \frac{1}{Np^j} \mathbb Z^d\right).
  \end{equation}
Since $C_1$ does not depend on the particular $i>0$, we conclude that $S$ is contained in the set \eqref{cond(b)TechnicalLemma}.

\noindent \emph{Condition~\ref{item:convergence}:} Suppose that $\{p^{-k_j}\vect a_{j}\}_{j}$ is a sequence in $S$ with each $\vect{a}_j\in A$ such that $\{\vect w\cdot(p^{-k_j}\vect a_j)\}_{j}$ converges to an element of $\mathbb{R}$. 

If $\{p^{-k_j}\vect a_{j}\}_{j}$ is contained in $p^{-i}A$ for some fixed $i$ then we may apply condition \ref{item:convergence} for $A$ to conclude that the sequence converges to an element of $\mathbb{Q}^d$. 
If the sequence $\{p^{-k_j}\vect a_{j}\}_{j}$ is contained in a finite union of the sets $p^{-i}A$, then we may apply the condition \ref{item:convergence} part of the argument in item (iii) of the proof of Theorem \ref{thm:fieldfamily} to conclude that our original sequence $\{p^{-k_j}\vect a_{j}\}_{j}$ converges to an element of $\mathbb{Q}^d$. 

Assume that the sequence is not contained in a finite union of sets $p^{-i}A$. 
Then, there is a strictly increasing subsequence $\{k_{n(j)}\}_{j}$ of the sequence $\{k_j\}_{j}$. 
Since the elements of $\{\vect a_j\}_{j}$ are bounded in length by Lemma~\ref{lem:algClosedbounded}, the associated subsequence $\{p^{-k_{n(j)}}\vect a_{n(j)}\}_{j}$ of $\{p^{-k_j}\vect a_{j}\}_{j}$ converges to $\vect 0$. 
Therefore, since $\{\vect w\cdot(p^{-k_j}\vect a_j)\}_{j}$ converges by assumption, it must converge to $0$. 
We will show that this forces our original sequence $\{p^{-k_j}\vect a_{j}\}_{j}$ to converge to $\vect 0$.

Suppose otherwise. Then there is an $\epsilon>0$ and a subsequence $\{p^{-k_{m(j)}}\vect a_{m(j)}\}_{j}$ of $\{p^{-k_j}\vect a_{j}\}_{j}$  such that $p^{-k_{m(j)}}|\vect a_{m(j)}|>\epsilon$ for all $j\geq 0$. By Lemma~\ref{lem:algClosedbounded}, there exists an $M>0$ such that $|\vect a_{m(j)}|<M$ for each $j\geq 0$. Consequently, $p^{-k_{m(j)}}M>p^{-k_{m(j)}}|\vect a_{m(j)}|>\epsilon$, and we conclude that $p^{-k_{m(j)}}>\epsilon/M$ for all $j\geq 0$. 

Because $\{\vect w\cdot(p^{-k_{m(j)}}\vect a_{m(j)})\}_{j}$ converges to $0$, and the coefficients $p^{-k_{m(j)}}$ are  bounded below by $\epsilon/M$, we have that $\{\vect w\cdot\vect a_{m(j)}\}$ converges to $0$. 
As $A\in \mathcal{A}_{\vect w}$, it follows that $\{\vect a_{m(j)}\}_{j}$ converges to an element $\vect z \in \mathbb{Q}^d$ such that $\vect w\cdot \vect z = 0$. But the only $\vect{z}\in \mathbb{Q}^d$ such that $\vect w\cdot \vect z = 0$ is $\vect z = \vect 0$ since $\vect w\in \mathbb{R}^d_{li}$. This is a contradiction. Hence our original sequence $\{p^{-k_j}\vect a_{j}\}_{j}$ converges to $\vect 0$.

\noindent \emph{Condition~\ref{item:plusminus}:}
Since $A$ satisfies condition \ref{item:plusminus}, for each $\bfdelta \in \mathbb{Q}^d$, there is an open cone $\sigma_{\bfdelta}$ containing $\vect w$ such that for all $\vect w'\in \sigma_{\bfdelta}\cap \mathbb{R}^d_{li}$, we have $A^+_{\bfdelta, \vect w'} = A^+_{\bfdelta, \vect w}$ and $A^-_{\bfdelta, \vect w'} = A^-_{\bfdelta, \vect w}$. Fix $\bfgamma' \in \mathbb{Q}^d$. We consider three cases: (i) $\vect w\cdot\vect\bfgamma' > 0$, (ii) $\vect w\cdot \bfgamma' = 0$, and (iii) $\vect w \cdot \vect \bfgamma' < 0$. 

When $\vect w \cdot \vect \bfgamma' > 0$, we have $S = S^-_{\bfgamma',
  \vect w}$, and $S^+_{\vect w, \vect \bfgamma'} = \emptyset$, since
$\vect w\cdot \vect s < 0$ for all $\vect s\in S$.  Let $V :=\{\vect
v\in \mathbb{R}^d : \vect v \cdot \bfgamma' > 0\}$.  Then, for $\vect
a\in A$ and $\vect w' \in ({\sigma}_{\vect 0}\cap V) \cap
\mathbb{R}^d_{li}$, we have $\vect w'\cdot \vect a< 0$ since $A =
A^-_{\vect 0, \vect w} = A^-_{\vect 0, \vect w'}$. So, $\vect w'\cdot
\vect s< 0$ for any $\vect s\in S$. Since $\vect w'\in V$, we have
$\vect w'\cdot \vect s < \vect w'\cdot \vect \bfgamma' $ for every
$\vect s\in S$. Hence, $S = S^-_{\bfgamma',\vect w'}$ and
$S^+_{\bfgamma', \vect w'} = \emptyset$. We conclude that for each
${\vect w'}\in (\sigma_{\vect 0}\cap V)\cap \mathbb{R}^d_{li}$, we
have $S^+_{\bfgamma', \vect w'} = S^+_{\bfgamma',\vect w}$ and
$S^-_{\bfgamma', \vect w'} = S^-_{\bfgamma',\vect w}$.

Next consider the case when $\vect w\cdot \bfgamma' = 0$. Then
$\bfgamma' = \vect 0$ since $\vect w\in \mathbb{R}^d_{li}$. Then for
any $\vect w'\in \sigma_{\vect 0}\cap \mathbb{R}^d_{li}$, we have $S =
S^-_{\vect 0, \vect w} = S^-_{\vect 0, \vect w'}$ and $\emptyset =
S^+_{\vect 0, \vect w} = S^+_{\vect 0, \vect w'}$.

Finally, consider the case $\vect w\cdot \bfgamma' < 0$.  Let $i_0>0$
be such that for all $i\geq i_0$, we have $\vect w\cdot (p^{-i}\vect
a)> \vect w\cdot \vect \bfgamma'$, for all $\vect a\in A$. Such an
$i_0$ exists because $\vect w\cdot \bfgamma'<0$ and the set $\{\vect
w\cdot \vect a : \vect a\in A\}$ is well ordered.

Let $\vect w'\in \left( \bigcap_{i = 1}^{i_0} \sigma_{p^i\bfgamma'}
\right)\cap \mathbb{R}^d_{li}$.  Then,
\begin{enumerate}
\item for each $1\leq i \leq i_0$, we have $(p^{-i}A)^+_{\bfgamma', \vect w} = (p^{-i}A)^+_{\bfgamma', \vect w'}$ and $(p^{-i}A)^-_{\bfgamma', \vect w} = (p^{-i}A)^-_{\bfgamma', \vect w'}$, because $\vect w'\in \sigma_{p^i\bfgamma'}$;
\item by the choice of $i_0$ and since $\vect w'\in
  \sigma_{p^{i_0}\bfgamma'}$, we have that $p^{-i_0}A =
  (p^{-i_0}A)^+_{\bfgamma', \vect w} = (p^{-i_0}A)^+_{\bfgamma', \vect
    w'}$. 
  Furthermore, for each
      $i > i_0$ and each $\vect a\in A$, we have $\vect w'\cdot
      (p^{-i}\vect a)> \vect w'\cdot (p^{-i_0}\vect a)$. Thus, for
      each $i>i_0$, we have $(p^{-i}A)^+_{\bfgamma', \vect w'}= p^{-i}A$
and $(p^{-i}A)^-_{\bfgamma', \vect w'}=\emptyset$.
\end{enumerate}
Therefore, for all $i\geq 1$, we have $(p^{-i}A)^+_{\bfgamma', \vect w} = (p^{-i}A)^+_{\bfgamma', \vect w'}$ and $(p^{-i}A)^-_{\bfgamma', \vect w} = (p^{-i}A)^-_{\bfgamma', \vect w'}$. It follows that $S^+_{\bfgamma', \vect w} = S^+_{\bfgamma', \vect w'}$ and $S^-_{\bfgamma', \vect w} = S^-_{\bfgamma', \vect w'}$. 
\end{proof}

Let $\nu: \ourK \rightarrow \mathbb{R} \cup \{\infty\}$ be the valuation defined by 
\[
\nu(f):= \text{min}\{\vect w\cdot \vect a: \vect a\in \text{supp}(f)\}
\]
for $f\neq 0$ and $\nu(0) :=\infty$. 

\begin{rem}[Properties of $\ourK$ with the valuation $\nu$]\label{F:properties}
\begin{itemize}
\item[ ]
\item[(i)]\label{F:properties(i)} The valued field $\ourK$ has valuation ring ${R_{\vect w}} := \{f\in \ourK : \nu(f)\geq 0\}$. The valuation ring has maximal ideal $\mathfrak m= \{f\in \ourK : \nu(f) >0\}$. The residue field ${R_{\vect w}}/\mathfrak m$ is isomorphic to the field of coefficients $\kk$; the map which sends $\overline{f}\in {R_{\vect w}}/\mathfrak m$ to the constant term of $f\in {R_{\vect w}}$ gives the isomorphism. 
    \item[(ii)]\label{F:properties(ii)}  The value group of $\ourK$ is $\Q^d$, which is $n$-divisible for all $n$.  
       \item[(iii)]\label{F:properties(iii)} By \cite{Rayner1}*{Theorem 2}, $K_{\vect w}$ is a \emph{Henselian valued field}. This means that $R_{\vect w}$ satisfies Hensel's lemma: if $g(X)\in {R_{\vect w}}[X]$ and its reduction mod $\frak m$, $\overline{g}(X)\in \kk[X]$ has a simple root $a\in \kk$, then there exists a unique $b\in R_{\vect w}$ such that $g(b) = 0$ and $\overline{b} = a\in \kk$. 
\end{itemize} 
\end{rem}

We can now prove the main theorem of this section. Our proof is essentially the same as \cite[Theorem 5.3]{Saavedra} except that the third bullet point in Saavedra's proof is replaced by our Lemma~\ref{prop:algClosedpDiscrete}. 

\begin{theorem}
\label{thm:algebraicallyclosed}
Fix an algebraically closed field $\kk$ of characteristic $p>0$.  The
field $\ourK =\kk^{\Q^d}(\mathcal A_{\vect w})$ is algebraically
closed.
\end{theorem}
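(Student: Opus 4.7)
My plan is to construct roots of monic polynomials over $\ourK$ iteratively via Newton-polygon and Hensel arguments, while tracking the supports with the axioms of Theorem~\ref{thm:fieldfamily} and using Lemma~\ref{prop:algClosedpDiscrete} to absorb the $p$-power denominators that arise from Artin--Schreier-type phenomena. Given a monic polynomial $f(y) = y^n + c_{n-1} y^{n-1} + \cdots + c_0 \in \ourK[y]$ with $n \geq 1$, the goal is to exhibit $\alpha \in \ourK$ with $f(\alpha) = 0$. Axiom~(iii) of $\mathcal A_{\vect w}$ lets me set $A := \bigcup_i \supp(c_i) \in \mathcal A_{\vect w}$, which will serve as a master set governing the construction.

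The steps proceed as follows. First, use a Newton polygon with respect to the valuation $\nu$ from Remark~\ref{F:properties} to identify a candidate $\lambda = \nu(\alpha) \in \mathbb Q^d$ for the valuation of a root; the well-ordering in axiom~\ref{item:perturbation} guarantees that the Newton data has a minimal slope. After substituting $y = t^{\lambda} z$, I obtain a polynomial $g(z) \in R_{\vect w}[z]$ whose reduction $\bar g(z) \in \kk[z]$ is nonconstant. If $\bar g$ has a simple root $a \in \kk$ (which is algebraically closed), the Henselian property (Remark~\ref{F:properties}(iii)) lifts $a$ to $b \in R_{\vect w}$ with $g(b) = 0$, and then $\alpha = t^{\lambda} b$ is a root in $\ourK$. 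If instead $\bar g$ has only roots of multiplicity $\geq 2$, I shift $z$ by an approximate root and iterate; in characteristic $p$ this can land in an Artin--Schreier situation where the exact root must carry exponents of the form $p^{-i} \vect a$ with $\vect a \in A$. At each iteration, the supports of partial roots are built from translations (axiom~(v)), unions (axiom~(iii)), subsets (axiom~(iv)), and semigroup closure (axiom~(vi)), together with the Artin--Schreier step, which is exactly passage from $A$ to $\bigcup_{i \geq 0} p^{-i} A$. The limit of the partial roots converges by condition~\ref{item:convergence} applied to the accumulated support, and the fact that this support is in $\mathcal A_{\vect w}$ gives $\alpha \in \ourK$.

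The main obstacle I expect is the Artin--Schreier step in the third item: in characteristic zero, the Puiseux construction closes up after finitely many refinements because the denominators of exponents are bounded by $n!$, but in characteristic $p$ the inseparability can force the exponents to involve $p$-power denominators of unbounded size, and a priori this could escape any reasonable field family. Lemma~\ref{prop:algClosedpDiscrete} is designed for precisely this step: it shows that $\bigcup_{i \geq 0} p^{-i} A$ stays in $\mathcal A_{\vect w}$ whenever $\vect w \cdot \vect a < 0$ for all $\vect a \in A$, which is exactly the regime relevant to an Artin--Schreier correction (strictly negative valuation of the error term we must undo). Once that is in hand, the rest of the argument is standard field-family bookkeeping in the spirit of Saavedra's construction, with all set-theoretic operations on supports absorbed by Theorem~\ref{thm:fieldfamily}.
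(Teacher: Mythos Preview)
Your plan attempts a direct Newton--Puiseux construction for arbitrary monic $f$, whereas the paper bypasses this entirely via a reduction due to Rayner: if $\ourK$ were not algebraically closed, then by \cite{Rayner1}*{Lemma 4} there would exist $f \in \ourK$ with $\nu(f) < 0$ such that the Artin--Schreier polynomial $X^p - X - f$ is irreducible. The paper then splits $f = f^+ + f^-$ according to whether $\vect w \cdot \vect v \geq 0$ or $< 0$ on the support, solves $X^p - X - f^+$ by the Henselian property (since $f^+ \in R_{\vect w}$ and the reduction mod $\mathfrak m$ has simple roots), and solves $X^p - X - f^-$ by the explicit formula $h(\vect v) = \sum_{j \geq 1} (f^-(p^j \vect v))^{p^{-j}}$, whose support lies in $\bigcup_{i \geq 1} p^{-i} \supp(f^-)$ and is therefore in $\mathcal A_{\vect w}$ by Lemma~\ref{prop:algClosedpDiscrete}. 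The sum $g + h$ is then a root of $X^p - X - f$, a contradiction.

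Your route is not wrong in spirit---it is roughly Saavedra's strategy for his larger field---but as written it has real gaps. First, you never explain why the limiting support lies in $\mathcal A_{\vect w}$: the field-family axioms give only \emph{finite} unions, so an infinite iteration cannot be controlled by (iii)--(vi) alone; you need an a priori container for the full support of the root, which is exactly what the explicit Artin--Schreier formula together with Lemma~\ref{prop:algClosedpDiscrete} provides in the paper's argument. Second, your invocation of condition~\ref{item:convergence} to show that ``the limit of the partial roots converges'' is a category error: condition~\ref{item:convergence} is a constraint that $p$-discrete sets must satisfy, not a tool for establishing that a constructed support is $p$-discrete or that a formal series makes sense. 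Third, you assert that the iteration ``can land in an Artin--Schreier situation'' without saying how this is recognized or reduced to from a general $f$; Rayner's lemma is precisely the structural ingredient that makes this reduction clean and avoids the need to analyze a general Newton--Puiseux iteration in characteristic $p$.
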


\begin{proof}
  We proceed by contradiction and assume that $\ourK$ is not algebraically closed. Then $\ourK$ admits a proper extension of finite degree.
  By \cite{Rayner1}*{Lemma 4} there exists $f\in \ourK$ with
  $\nu(f)<0$ such that the polynomial $X^p-X-f\in \ourK [X]$ is
  irreducible.  We express $f$ as the sum of two elements $f^+,f^-\in
  \ourK$ as follows:
\[
f^+(\vect v) =\begin{cases}
	f(\vect v) &  \vect v\in \supp(f), \vect w \cdot \vect v \geq 0 \\
	0 & \text{otherwise}	
\end{cases}
\quad
f^-(\vect v) = \begin{cases}
	f(\vect v) &  \vect v\in\supp(f), \vect w \cdot \vect v <0 \\
	0 & \text{otherwise.}
\end{cases}
\]
Since $\supp (f^+)$ and $\supp( f^-)$ are subsets of $\supp (f)\in \mathcal{A}_\vect w$, it follows that $\supp (f^+), \supp( f^-) \in \mathcal{A}_{\vect w}$. It suffices to prove the following two claims: 

 {\bf Claim 1.}  There exists $g\in \ourK$ that is a root of $X^p-X-f^+$.

{\bf Claim 2.}  There exists $h\in \ourK$ that is a root of $X^p-X-f^-$.

\noindent Indeed, for such $g$ and $h$ we have that $g+h$ is a root of $X^p-X-f$ which contradicts the irreducibility of $X^p-X-f$. Hence $\ourK$ is algebraically closed. 

\medskip

\noindent{\bf Proof of Claim 1.}
We have that $f^+$ belongs to the valuation ring ${R_{\vect w}}$ 
of the valued field $\ourK$, since $\nu(f^+)\geq 0$ by definition.  
We will use the Henselian property of ${R_{\vect w}}$ to find the root. Indeed, the reduction of $X^p-X-f^+$ modulo the maximal ideal $\frak{m}\subseteq {R_{\vect w}}$ is $X^p-X-f^+(\vect 0)\in \kk[X]$. 
Because $\kk$ is algebraically closed, $X^p-X-f^+(\vect 0)$ factors. Furthermore, $X^p-X-f^+(\vect 0)$ has simple roots because its derivative is $-1$.  By the Henselian property each of these roots lifts to a distinct root of $X^p-X-f^+$, so  the desired root $g$ exists.   This completes the proof of Claim 1.  
\medskip

\noindent{\bf Proof of Claim 2.} 
Define $h:\Q^d\to \kk$ by
\[
h(\vect v) = \begin{cases}
	\sum_{j=1}^{\infty}\left(f^-(p^j\vect v)\right)^{p^{-j}} & \vect w \cdot {\vect v} < 0 \\
	0 & \text{ otherwise}.
\end{cases}
\]
We will see in \eqref{eqn: righth} that this is exactly the root we want.

\item \emph{Subclaim a}.  $h$ is well defined.  

Since $\kk$ is
  algebraically closed of characteristic $p$, the $p^j$th roots exist
  and are unique. We now show that the infinite sum in the definition of $h$ is in
  fact a finite sum.  Assume for contradiction that there are infinitely many $p^j\vect v$s
  in $\supp(f^-)$.  Since $\vect w\cdot \vect v<0$, these infinitely many $p^j\vect v$s form an infinite strictly decreasing
  sequence. 
  This is a contradiction: $\supp(f^-)\in\mathcal A_{\vect w}$ and so $\supp(f^-)$ is well ordered by item (i) of the
  definition of field family (Definition \ref{dfn: fieldfamily}).
  
\item \emph{Subclaim b}. $\supp(h)\in \mathcal A_{\vect w}$. 

Since $\supp(h)$ is a subset of $S = \bigcup_{i=1}^\infty p^{-i} \supp(f) $, and 
$S\in \mathcal{A}_{\vect w}$, 
by Lemma~\ref{prop:algClosedpDiscrete},
 we conclude that $\supp (h)\in \mathcal{A}_{\vect w}$ as
$\mathcal{A}_{\vect w}$ is a field family.

\item \emph{Subclaim c}. $h^p-h-f^-=0$.

First suppose $\vect w \cdot \vect v <0$.  Then $\vect w \cdot p^{-1}\vect
v <0$.  We claim that
\[
h^p(\vect v) = h\left(p^{-1}\vect v \right)^p = \sum_{j=1}^{\infty}\left(f^-(p^{j-1}\vect v) \right)^{p^{-(j-1)}}.
\]
The first equality 
is the Frobenius homomorphism, and the second equality is by the definition of $h$. 
Hence, we have 
\begin{equation}
\label{eqn: righth}
\begin{aligned}
h^p(\vect v)-h(\vect v)-f^-(\vect v) &= \sum_{j=1}^{\infty}\left(f^-(p^{j-1}\vect v) \right)^{p^{-(j-1)}} - \sum_{j=1}^{\infty}\left(f^-(p^j\vect v)\right)^{p^{-j}} - f^-(\vect v) \\
	&= \sum_{j'=0}^{\infty}\left(f^-(p^{j'}\vect v)\right)^{p^{-j'}} - \sum_{j=1}^{\infty}\left(f^-(p^j\vect v)\right)^{p^{-j}} - f^-(\vect v) \\
	&= f^-(\vect v)-f^-(\vect v) = 0.
\end{aligned}
\end{equation}
On the other hand, when $\vect w \cdot \vect v \geq 0$
we have $h^p(\vect v)=h(\vect v)= 0=f^-(\vect v)$ by definition.  So
$h$
satisfies the polynomial $X^p-X-f^-$.
\end{proof}

\section{Toric Bertini Theorems}

\label{sec:toricBertini}

In this section we prove the main theorem of this paper: Theorem~\ref{t:maintoricbertini}.

If $\charr(\kk)>0$ set $p=\charr(\kk)$; otherwise set $p=1$.  For
$\mathbf{w} \in \R_{li}^d$, let $\ourK$ be the field of $p$-discrete
series in direction $\mathbf{w}$ with coefficients in $\kk$, with
variables $t_1,\dots,t_d$, as defined in Definition~\ref{dfn:K_w}.  If
$\charr(\kk)=0$, we set $p=1$ in this definition; this is still an
algebraically closed field by \cites{McD,ArocaIlardi}.  For an open cone $C$
containing $\mathbf{w}$, let $K_C$ be the subring of $\ourK$
consisting of those elements whose supports are well ordered with
respect to $\preceq_{\mathbf{w}'}$ for every $\mathbf{w}' \in C \cap
\R_{li}^d$, and for which the only allowable denominators in exponents
are powers of $p$.  The fact that $K_C$ is a subring follows from the
fact that unions and sums of well-ordered subsets of $\mathbb Q^d$ are
again well ordered.  We will use the following properties of $K_C$:
\begin{enumerate}
\item For $\alpha \in K_C$, there is $\bfgamma \in \mathbb Q^d$ with $\supp(\alpha) \subset \bfgamma+C^{\vee}$.  
\item Polynomials $\alpha \in \kk[t_1^{\pm 1},\dots,t_d^{\pm 1}]$
  are in $K_C$ for every cone $C$.
\item  If $C' \subseteq C$, then
  $K_{C}$ is a subring of $K_{C'}$.
  \end{enumerate}

Let $\genPuiseux$ be the field of generalized Puiseux series in one
variable $x$.  This consists of all formal power series with
coefficients in $\kk$ whose support is a well-ordered subset of
$\bigcup_{i=0}^\infty\frac{1}{Np^i}\Z$ for some integer $N$.

\begin{dfn}\label{dfn:substitution_map}
  Fix $\mathbf{w} \in \mathbb R^d_{li}$, and an open cone $C$
  containing $\mathbf{w}$.  For $\mathbf{n}= (n_1,\ldots,n_d) \in
  C \cap \mathbb Z^d$, and 
  $\bftheta=(\theta_1,\ldots,\theta_d)\in (\kk^\ast)^d$
  we define
$
\spec : K_C \rightarrow \genPuiseux
$
by 
$$\spec \left( \sum c_{\mathbf{u}} \mathbf{t}^{\mathbf{u}} \right) = \sum c_{\mathbf{u}}\bftheta^\mathbf{u} x^{\mathbf{n} \cdot \mathbf{u}}.$$
\end{dfn}

\begin{lemma}
  \label{lem:sub}
  For every $\mathbf{n}=(n_1,\ldots,n_d) \in C \cap \Z^d$, and every
  $\bftheta \in (\kk^*)^d$, the function $\spec$
is well defined and is a ring homomorphism from $K_C$ to $\kk\{\!\{x\}\!\}$.
\end{lemma}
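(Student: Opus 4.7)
The plan is to verify that $\spec(\alpha) \in \genPuiseux$ for every $\alpha \in K_C$; given this, additivity is immediate from the formula and multiplicativity reduces to a routine Cauchy-product rearrangement. Membership in $\genPuiseux$ requires three checks: the image exponents $\vect n \cdot \vect u$ have denominators that are powers of $p$ (immediate since $\vect n \in \Z^d$ and $\supp(\alpha) \subseteq \bigcup_j \frac{1}{p^j}\Z^d$); the set $\{\vect n \cdot \vect u : \vect u \in \supp(\alpha)\}$ is well-ordered in $\Q$; and for every $q \in \Q$ the fiber $F_q := \{\vect u \in \supp(\alpha) : \vect n \cdot \vect u = q\}$ is finite (so that the coefficient $\sum_{\vect u \in F_q} c_{\vect u}\bftheta^{\vect u}$ lies in $\kk$).

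The key geometric input is that $\supp(\alpha) \subseteq \bfgamma + C^\vee$ for some $\bfgamma \in \Q^d$ (property (1) preceding the definition of $\spec$), and since $\vect n$ lies in the open cone $C$ it is in the interior of $C$, so $\vect n \cdot \vect v > 0$ for every nonzero $\vect v \in C^\vee$; consequently $(\bfgamma + C^\vee) \cap \{\vect x : \vect n \cdot \vect x \leq K\}$ is a bounded polyhedron for every $K \in \R$. I will prove the well-ordering and fiber-finiteness together by one unit-vector argument: if either fails, I obtain an infinite sequence of pairwise distinct $\vect u_i \in \supp(\alpha)$ in a fixed bounded set with $\vect n \cdot (\vect u_i - \vect u^*) \geq 0$, where $\vect u^* \in \R^d$ is the limit of a convergent subsequence (in the well-ordering case $\vect n \cdot (\vect u_i - \vect u^*) > 0$ decreasing to $0$; in the fiber case $\vect n \cdot (\vect u_i - \vect u^*) = 0$). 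Passing to a subsequence, the unit vectors $\vect d_i := (\vect u_i - \vect u^*)/|\vect u_i - \vect u^*|$ converge to $\vect d \in S^{d-1}$, and $\vect n \cdot \vect d \geq 0$. For every $\vect w' \in C \cap \R^d_{li}$, the set $\{\vect w' \cdot \vect u_i\}$ is well-ordered (by the defining property of $K_C$) and converges to $\vect w' \cdot \vect u^*$; a well-ordered subset of $\R$ cannot cluster at a limit from above, so $\vect w' \cdot (\vect u_i - \vect u^*) \leq 0$ eventually, whence $\vect w' \cdot \vect d \leq 0$. Density of $\R^d_{li}$ in $\R^d$ and openness of $C$ upgrade this to $\vect w' \cdot \vect d \leq 0$ for every $\vect w' \in C$, i.e., $\vect d \in -C^\vee$. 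Since $\vect n$ is in the interior of $C$, $\vect n \cdot \vect d \geq 0$ combined with $\vect d \in -C^\vee$ forces $\vect d = 0$, contradicting unit length.

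The main obstacle is precisely this passage from well-ordering along $\R^d_{li}$-directions to the analogous control for the rational direction $\vect n$; the argument above sidesteps the subtler conditions (c) and (d) of $p$-discreteness and relies only on the openness of $C$, the containment $\supp(\alpha) \subseteq \bfgamma + C^\vee$, and the well-ordering axiom built into the definition of $K_C$. With $\spec(\alpha) \in \genPuiseux$ established, multiplicativity is formal: the $x^q$-coefficient of $\spec(\alpha)\spec(\beta)$ equals $\sum_{\vect u_1, \vect u_2 : \vect n \cdot (\vect u_1 + \vect u_2) = q} \alpha_{\vect u_1}\beta_{\vect u_2}\bftheta^{\vect u_1 + \vect u_2}$, which after grouping by $\vect u := \vect u_1 + \vect u_2$ matches the $x^q$-coefficient of $\spec(\alpha\beta)$; fiber-finiteness applied to $\alpha\beta \in K_C$ (a subring of $\ourK$) ensures both sides are finite sums in $\kk$.
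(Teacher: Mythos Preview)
Your argument is correct, but it takes a genuinely different route from the paper's. The paper observes that since $C$ is open and $\mathbf{n}\in C$, one can write $\mathbf{n}=a_1\mathbf{w}_1+\cdots+a_d\mathbf{w}_d$ with $a_i>0$ and $\mathbf{w}_i\in C\cap\R^d_{li}$; given an infinite non-repeating sequence in a fiber, one successively passes to subsequences that are weakly increasing with respect to $\preceq_{\mathbf{w}_1},\dots,\preceq_{\mathbf{w}_d}$ (possible by well-ordering), and the resulting subsequence is then strictly increasing in the $\mathbf{n}$-direction, contradicting constancy on the fiber. This is a short, purely order-theoretic argument that avoids any appeal to compactness or convergence in $\R^d$.

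Your approach instead uses the containment $\supp(\alpha)\subseteq\bfgamma+C^\vee$ to trap any offending sequence in a bounded region, extracts a Euclidean limit $\mathbf{u}^*$, and analyses the limiting unit direction $\mathbf{d}$ of approach: well-ordering along every $\mathbf{w}'\in C\cap\R^d_{li}$ forces $\mathbf{d}\in -C^\vee$, while the hypothesis gives $\mathbf{n}\cdot\mathbf{d}\geq 0$, and openness of $C$ then forces $\mathbf{d}=0$. This is more analytic and slightly longer, but it has the virtue of handling fiber-finiteness and well-ordering of the image in a single stroke, whereas the paper treats only fiber-finiteness explicitly (the same iterated-subsequence trick does dispose of well-ordering, but this is left implicit). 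Your argument also makes no use of the decomposition of $\mathbf{n}$ into $d$ linearly independent directions, trading that for Bolzano--Weierstrass; neither approach needs conditions \ref{item:convergence} or \ref{item:plusminus} of $p$-discreteness here.
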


\begin{proof}
We first observe that the condition on $K_C$ that the denominators of
exponents be only powers of $p$ ensures that the expression ${
  \bftheta}^{\vect u}$ is well defined, as $p$th roots are unique in
characteristic $p$.  Fix $\alpha \in K_C$. To finish showing that $\spec$ is well defined, we need to check that the
substitution does not map infinitely many terms of $\alpha$ to the
same term in $\genPuiseux$.  In other words, we need to check
that the fibers of the map
  \begin{equation} \label{eqtn:collapsemap} \supp(\alpha) \rightarrow \Q, \quad \mathbf{v} \mapsto \mathbf{n}
    \cdot \mathbf{v} \end{equation}
  are finite.  Suppose on the contrary that for some $r \in \Q$ the
  preimage $\{\mathbf{v} \in \supp(\alpha) : \mathbf{n} \cdot
  \mathbf{v} = r\}$ is infinite.  Consider an infinite non-repeating
  sequence in this set.  

  Since $C$ is an open cone and $\mathbf{n} \in C$, there are vectors
  $\mathbf{w}_1, \dots, \mathbf{w}_d \in C \cap \R_{li}^d$ such that
  $\mathbf{n} = a_1 \mathbf{w}_1+\cdots + a_d \mathbf{w}_d$, for some
  positive real numbers $a_1,\dots, a_d$.  Consider the infinite
  sequence above.  Since it is well ordered with respect to
  $\preceq_{\mathbf{w}_1}$, we can pass to an increasing subsequence.
We can now repeat this operation using the orders given by
  $\mathbf{w}_2, \dots, \mathbf{w}_d$, to get a sequence that is increasing with
  respect to all the orders given by $\mathbf{w}_1, \dots, \mathbf{w}_d$.
Thus the dot product of the sequence with $\mathbf{n}$
must be strictly increasing as well, contradicting the assumption that
it is constant.

This shows that the substitution is a well-defined map.  The fact that $\spec$ is a ring homomorphism follows from the fact that taking the dot product with $\mathbf{n}$ commutes with union and sum of subsets of $\mathbb Q^d$.
\end{proof}

Since $\kk[t_1,\dots,t_d] \subseteq K_C$ for all cones $C$, we can also define an analogous  homomorphism from $\kk[t_1,\dots,t_d,y]$:
$$\spec \colon \kk[t_1,\dots,t_d,y] \rightarrow \kk[x^{\pm 1},y]$$
by
$$\spec\left(\sum c_{\mathbf{u},j} {\vect t}^{\vect u}y^j\right) = \sum \spec(c_{\mathbf{u},j} {\vect t}^{\vect u}) y^j.$$

We say an element $\alpha \in K_C$ has unbounded support if the support of $\alpha$ is not contained in a
bounded region in $\R^d$.

\begin{lemma}
    \label{lem:unboundedsupport}
Let $\alpha \in K_C$ have unbounded support and be 
algebraic over $\kk(t_1,\dots,t_d)$.
Then there exists an open cone $C' \subseteq C$  containing $\mathbf{w}$
such that for any integer vector $\mathbf{n} \in C'$ and any $\bftheta  \in (\kk^*)^d$ the image
$\spec(\alpha)$
is not a polynomial in $\kk[x^{\pm 1}]$.
\end{lemma}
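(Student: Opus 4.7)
The plan is to choose $C'$ so that for every integer vector $\vect n\in C'$, the $\preceq_{\vect w}$-minimum $\vect u_1$ of $A:=\supp(\alpha)$ remains the unique minimum of $\vect u\mapsto\vect n\cdot\vect u$ on $A$, so the leading monomial of $\spec(\alpha)$ is nonzero for every $\bftheta$. Iterating on $\alpha-c_{\vect u_1}\vect t^{\vect u_1}$ (which is still algebraic over $\kk(\vect t)$ and has unbounded support) will then produce infinitely many nonzero monomials, contradicting $\spec(\alpha)\in\kk[x^{\pm 1}]$.

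First I would refine the cone. By condition~\ref{item:cone} of $p$-discreteness applied to $A$, there are $\bfgamma\in\Q^d$, $N>0$, and a pointed cone $C_A$ with $\vect w\in\intt(C_A^\vee)$ such that $A\subseteq(\bfgamma+C_A)\cap\bigcup_{j\ge 0}\tfrac{1}{Np^j}\Z^d$. Let $\sigma_{\vect u_1}$ be the open cone from condition~\ref{item:plusminus} applied to $\bfgamma'=\vect u_1$, and take $C'$ to be a pointed open cone with $\vect w\in C'\subseteq C\cap\intt(C_A^\vee)\cap\sigma_{\vect u_1}$. For $\vect n\in C'\cap\Z^d$, Lemma~\ref{lem:sub} gives finite fibers of $\vect u\mapsto\vect n\cdot\vect u$ on $A$, so the image $\vect n\cdot A\subseteq\bigcup_j\tfrac{1}{Np^j}\Z$ is infinite (as $A$ is infinite), and $\vect n\in\intt(C_A^\vee)$ combined with unboundedness of $A$ in $\bfgamma+C_A$ forces $\vect n\cdot\vect a\to+\infty$ on any unbounded subsequence in $A$. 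A density argument applied to condition~\ref{item:plusminus} on $\R^d_{li}\cap\sigma_{\vect u_1}$ then yields $\vect n\cdot\vect u_1<\vect n\cdot\vect u$ strictly for all $\vect u\in A\setminus\{\vect u_1\}$, so the fiber over the minimum of $\vect n\cdot A$ equals $\{\vect u_1\}$ and the coefficient of $x^{\vect n\cdot\vect u_1}$ in $\spec(\alpha)$ is $c_{\vect u_1}\bftheta^{\vect u_1}\ne 0$ for every $\bftheta\in(\kk^*)^d$.

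To finish I would iterate. The element $\alpha_k:=\alpha-\sum_{i=1}^k c_{\vect u_i}\vect t^{\vect u_i}$ lies in $K_{C'}$, is still algebraic over $\kk(\vect t)$ (each $\vect t^{\vect u_i}$ is algebraic of degree dividing some $Np^j$), and still has unbounded support with $\preceq_{\vect w}$-minimum $\vect u_{k+1}$. The same leading-term analysis, applied to $\alpha_k$ after refining $C'$ into $\sigma_{\vect u_{k+1}}$, produces a nonzero coefficient of $x^{\vect n\cdot\vect u_{k+1}}$ in $\spec(\alpha)$. Iterating in $k$ exhibits distinct nonzero monomials of $\spec(\alpha)$ at the infinitely many exponents $\vect n\cdot\vect u_1<\vect n\cdot\vect u_2<\cdots$, which is incompatible with $\spec(\alpha)\in\kk[x^{\pm 1}]$.

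The hardest step is making this iteration compatible with a single fixed $C'$: each stage requires $C'\subseteq\sigma_{\vect u_{k+1}}$, but the countable intersection $\bigcap_k\sigma_{\vect u_k}$ may fail to be open. I anticipate resolving this by fixing $\vect n$ and $\bftheta$ first and, assuming for contradiction $\spec(\alpha)\in\kk[x^{\pm 1}]$ has some finite support of size $D$, running only $D+1$ iterations; the required finite intersection $\sigma_{\vect u_1}\cap\cdots\cap\sigma_{\vect u_{D+1}}$ is open and contains $\vect w$. A uniform bound $D_0$ independent of $(\vect n,\bftheta)$ is needed to make a single choice of $C'$, and this is the essential use of the algebraicity of $\alpha$ over $\kk(\vect t)$: the degree of the minimal polynomial controls $\spec(\alpha)$ as an algebraic function over $\kk(x)$ and excludes the unbounded cancellation pattern that a Laurent-polynomial specialization of unbounded-support $\alpha$ would force.
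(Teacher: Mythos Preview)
Your strategy of peeling off successive $\preceq_{\mathbf w}$-minima $\mathbf u_1,\mathbf u_2,\dots$ and showing each survives as a term of $\spec(\alpha)$ runs into exactly the difficulty you identify: step $k$ requires $\mathbf n\in\sigma_{\mathbf u_k}$, so a single choice of $C'$ demands a uniform bound $D_0$ on the number of iterations. Your last paragraph asserts that algebraicity supplies this $D_0$, but it does not. Knowing that $\spec(\alpha)$ satisfies $\spec(h)(x,\,\cdot\,)=0$ bounds the \emph{$x$-degree} of $\spec(\alpha)$ (assuming it is a Laurent polynomial), and that bound scales linearly with $|\mathbf n|$; it gives no $\mathbf n$-independent bound on the number of terms. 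So the circularity is not resolved: the number of cone refinements you would need depends on the very $\mathbf n$ you are trying to constrain.

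The paper sidesteps this by using algebraicity in a different way. From $h(\mathbf t,\alpha)=0$ one extracts, for $\mathbf n$ in a fixed open cone containing $\mathbf w$, a bound $\deg_x\spec(\alpha)\le \mathbf n\cdot\mathbf q$ with $\mathbf q$ fixed (coming from the Newton polytope of $h$). Unboundedness of $\supp(\alpha)$ inside a translate of a pointed cone then furnishes a single $\mathbf v\in\supp(\alpha)$ with $\mathbf n\cdot\mathbf v>\mathbf n\cdot\mathbf q$ for all $\mathbf n$ in a smaller cone, and one application of condition~(d) at $\bfgamma'=\mathbf v$ shows this term is not cancelled. Thus only one cone refinement is needed, not $D_0$ of them, and the circularity never arises. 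If you want to rescue your approach, replace the ``infinitely many surviving minima'' argument with this single large surviving term; your iterative stripping of low-order terms is not needed.
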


\begin{proof}
 Since $\alpha$ is algebraic over $\kk(t_1,\dots,t_d)$, there is a
 polynomial $h \in \kk[t_1,\dots,t_d,y]$ such that
 $h(t_1,\dots,t_d,\alpha) = 0$.  Since $\spec$ is a ring homomorphism
 for any $\mathbf{n} \in C \cap \mathbb Z^d$ and any $\bftheta \in (\kk^*)^d$, for such choices we 
 then have
   \[
   \spec(h)(x,\spec(\alpha)) = h ( \theta_1 x^{n_1}, \ldots, \theta_d x^{n_d}, \spec (\alpha)) = 0.
   \]
Suppose
 $\spec (\alpha)$
is a polynomial in $\kk[x^{\pm 1}]$.
In order to have
$\spec(h)(x,\spec(\alpha)) = 0$, there
must be two distinct monomials $x^{a_i} y^i$ and $x^{a_j} y^j$ in $\spec(h)$ that have
the same maximal $x$-degree after plugging in $\spec (\alpha)$ for $y$.  Then 
\[ a_i + i \cdot \deg_x\left(\spec (\alpha)\right) = a_j + j \cdot \deg_x \left(\spec (\alpha)\right)\]
so $i \neq j$ and we have $\deg_x \spec (\alpha) = \frac{a_i - a_j}{j-i}$.
Since each monomial in $\spec (h)$ is the specialization of a monomial in $h$, 
\[
\deg_x \spec(\alpha)
\leq \max\left\{\mathbf{n} \cdot \left(\frac{\mathbf{u}_r -
  \mathbf{u}_s}{s-r}\right) : {\bf t}^{\mathbf{u}_r} y^r \text{ and } {\bf t}^{\mathbf{u}_s} y^s \text{
    are monomials in }h\right\}.
  \]
For all directions $\mathbf{n}$ sufficiently close to $\mathbf{w}$,
the maximum is attained at the same pair of monomials ${\bf
  t}^{\mathbf{u}_i} y^i$ and ${\bf t}^{\mathbf{u}_j} y^j$ in $h$.
Explicitly, we may take $\mathbf{n}$ in the open cone $C_1$
containing $\mathbf{w}$ in the normal fan of the convex hull of the
$\frac{\mathbf{u}_r-\mathbf{u}_s}{s-r}$, which is also the normal cone
of $\frac{\mathbf{u}_i-\mathbf{u}_j}{j-i}$.

  Since $\alpha \in K_C$, there is $\bfgamma \in \mathbb Q^d$ with
  $\supp(\alpha) \subset \bfgamma+C^{\vee}$.  Choose an open cone
  $C_2 \subseteq C_1 \cap C$ whose closure is contained in the interior of
  $C$.  

Then $(\bfgamma+C^{\vee}) \setminus
(\frac{\mathbf{u}_i-\mathbf{u}_j}{j-i} + {C_2}^{\vee})$ is bounded,
but $\supp(\alpha)$ is unbounded,
so we can choose $\mathbf{v} \in \supp(\alpha) \cap \frac{\mathbf{u}_i-\mathbf{u}_j}{j-i} + {C_2}^{\vee}$.
Then since $\mathbf{v} \in \frac{\mathbf{u}_i-\mathbf{u}_j}{j-i} + {C_2}^{\vee}$, for all $\mathbf{n} \in
C_2$ we have $\mathbf{n} \cdot \mathbf{v} \geq \mathbf{n} \cdot
\frac{\mathbf{u}_i-\mathbf{u}_j}{j-i}$.
By axiom~\ref{item:plusminus} of $p$-discreteness for $\ourK$, there
is an open cone $C_3$ containing $\mathbf{w}$ for which for any
$\mathbf{n} \in C_3 \cap \mathbb Z^d$ the exponent $\mathbf{v}$ is
the only preimage of the map~\eqref{eqtn:collapsemap}, so does not get
cancelled after the substitution $\spec$.  Thus for $\mathbf{n} \in
C_2 \cap C_3$ the monomial $x^{\mathbf{n} \cdot \mathbf{v}}$ appears
in a term of $\spec(\alpha)$, contradicting the degree bound above.
Thus $\spec(\alpha)$ cannot be a polynomial for $\mathbf{n} \in C' :=
C_2 \cap C_3$.
\end{proof}

\begin{lemma}   \label{lem:unboundeddenominator}
Let $\alpha \in K_{C}$ be an element with infinite but bounded support.  Then there
exists an open cone $C' \subseteq C$ containing $\mathbf{w}$ and a sublattice $H \subset \Z^d$ such that for any integer vector
$\mathbf{n} \in (C' \setminus H) \cap \Z^d$, the substitution
$\spec(\alpha)$ is not a polynomial.  Moreover, the sublattice
$H$ can be chosen to have an arbitrarily large index in $\Z^d$.
\end{lemma}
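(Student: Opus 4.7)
The plan is to locate a single exponent $\mathbf{v}^{*}\in\supp(\alpha)$ with very large $p$-power denominator and to show that, for $\mathbf{n}$ in a suitable open cone around $\mathbf{w}$ and outside a certain sublattice, the term $c_{\mathbf{v}^{*}}\bftheta^{\mathbf{v}^{*}}x^{\mathbf{n}\cdot\mathbf{v}^{*}}$ of $\spec(\alpha)$ has a non-integer $x$-exponent and is not cancelled by any other term. By axiom~\ref{item:cone} of $p$-discreteness, $\supp(\alpha)\subseteq \bigcup_{j\geq 0}\tfrac{1}{Np^{j}}\mathbb{Z}^{d}$ for some $N$, and each lattice $\tfrac{1}{Np^{j}}\mathbb{Z}^{d}$ meets the bounded set $\supp(\alpha)$ in only finitely many points. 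Since $\supp(\alpha)$ is infinite, for every positive integer $J$ there exists $\mathbf{v}^{*}\in\supp(\alpha)$ whose minimal representation $\mathbf{v}^{*}=\mathbf{u}^{*}/(Np^{J'})$ with $\mathbf{u}^{*}\in\mathbb{Z}^{d}$ has $J'\geq J$; at this minimal $J'$, at least one coordinate of $\mathbf{u}^{*}$ is coprime to $p$.

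Next I would apply axiom~\ref{item:plusminus} with $\bfgamma'=\mathbf{v}^{*}$, in the equivalent form from the remark following Definition~\ref{dfn: p-discrete}: there is an open cone $\sigma$ containing $\mathbf{w}$ with $\supp(\alpha)^{+}_{\mathbf{v}^{*},\mathbf{w}}\subseteq \mathbf{v}^{*}+\sigma^{\vee}$ and $\supp(\alpha)^{-}_{\mathbf{v}^{*},\mathbf{w}}\subseteq \mathbf{v}^{*}-\sigma^{\vee}$. Because $\mathbf{w}\in\mathbb{R}^{d}_{li}$, no point of $\mathbb{Q}^{d}\setminus\{\mathbf{v}^{*}\}$ satisfies $\mathbf{w}\cdot\mathbf{v}=\mathbf{w}\cdot\mathbf{v}^{*}$, so $\supp(\alpha)$ is the disjoint union of $\{\mathbf{v}^{*}\}$ with these two sets. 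Since $\sigma$ is an open cone in $\mathbb{R}^{d}$ it is full-dimensional, and hence $\sigma^{\vee}$ is pointed and $\mathbf{n}\cdot\mathbf{y}>0$ for every $\mathbf{n}\in\sigma$ and every nonzero $\mathbf{y}\in\sigma^{\vee}$. Applied with $\mathbf{y}=\mathbf{v}-\mathbf{v}^{*}$ (respectively $\mathbf{v}^{*}-\mathbf{v}$) this gives $\mathbf{n}\cdot\mathbf{v}\neq\mathbf{n}\cdot\mathbf{v}^{*}$ for every $\mathbf{v}\in\supp(\alpha)\setminus\{\mathbf{v}^{*}\}$ and every $\mathbf{n}\in\sigma$. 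Thus the fiber of $\mathbf{v}\mapsto\mathbf{n}\cdot\mathbf{v}$ over $\mathbf{n}\cdot\mathbf{v}^{*}$ is exactly $\{\mathbf{v}^{*}\}$, so the term $c_{\mathbf{v}^{*}}\bftheta^{\mathbf{v}^{*}}x^{\mathbf{n}\cdot\mathbf{v}^{*}}$ survives in $\spec(\alpha)$ for every $\bftheta\in(\kk^{*})^{d}$.

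I would then take $C'=\sigma\cap C$ and define the sublattice $H=\{\mathbf{n}\in\mathbb{Z}^{d}:\mathbf{n}\cdot\mathbf{v}^{*}\in\mathbb{Z}\}=\{\mathbf{n}\in\mathbb{Z}^{d}:Np^{J'}\mid\mathbf{n}\cdot\mathbf{u}^{*}\}$. The image of the homomorphism $\mathbb{Z}^{d}\to\mathbb{Z}/Np^{J'}\mathbb{Z}$ given by $\mathbf{n}\mapsto\mathbf{n}\cdot\mathbf{u}^{*}$ is the cyclic subgroup generated by $\gcd(u_{1}^{*},\dots,u_{d}^{*})$, and this gcd is coprime to $p$, so $[\mathbb{Z}^{d}:H]$ is divisible by $p^{J'}$ and can be made arbitrarily large by taking $J'$ large. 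For $\mathbf{n}\in(C'\setminus H)\cap\mathbb{Z}^{d}$ the exponent $\mathbf{n}\cdot\mathbf{v}^{*}$ lies in $\mathbb{Q}\setminus\mathbb{Z}$, and combined with the singleton-fiber conclusion this forces $\spec(\alpha)\notin\kk[x^{\pm 1}]$.

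The hardest step will be the singleton-fiber argument in the second paragraph. Axiom~\ref{item:plusminus} only directly controls $\mathbf{w}'\in\sigma\cap\mathbb{R}^{d}_{li}$, whereas we need the strict inequality $\mathbf{n}\cdot(\mathbf{v}-\mathbf{v}^{*})\neq 0$ for every integer $\mathbf{n}\in\sigma$, uniformly over all $\mathbf{v}\in\supp(\alpha)\setminus\{\mathbf{v}^{*}\}$; pointedness of $\sigma^{\vee}$ together with the openness of $\sigma$ supplies exactly this, converting the soft containment in $\mathbf{v}^{*}\pm\sigma^{\vee}$ coming from axiom~\ref{item:plusminus} into the strict separation required to rule out cancellation for arbitrary $\bftheta$.
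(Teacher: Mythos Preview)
Your proposal is correct and follows essentially the same route as the paper: pick an exponent $\mathbf{v}^{*}$ in the support with large denominator, use axiom~\ref{item:plusminus} to isolate it as a singleton fiber of the collapse map for $\mathbf{n}$ in an open cone around $\mathbf{w}$, and set $H=\{\mathbf{n}:\mathbf{n}\cdot\mathbf{v}^{*}\in\mathbb{Z}\}$. Your treatment is in fact more explicit than the paper's in two places: you spell out the openness/pointedness argument showing the fiber is a singleton (the paper simply asserts this consequence of axiom~\ref{item:plusminus}), and you compute the index of $H$ via the image of $\mathbf{n}\mapsto\mathbf{n}\cdot\mathbf{u}^{*}\bmod Np^{J'}$ rather than the paper's coordinate-direction argument using $\mathbf{e}_j,\dots,(M-1)\mathbf{e}_j$, but both yield the same arbitrarily-large-index conclusion.
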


\begin{proof}
The support of $\alpha$ must contain points in $\Q^d$ with coordinates
whose denominators (in reduced form) are arbitrarily large because
there are only finitely many elements in $\Q^d$ in a bounded region
with denominators smaller than a given bound.  Let $\mathbf{v}$ be an
element of $\supp(\alpha)$ with a non-integer coordinate.  By
axiom~\ref{item:plusminus} of $p$-discreteness, there is an open cone $C' \subseteq C$
containing $\mathbf{w}$ for which for any $\mathbf{n} \in C' \cap
\mathbb Z^d$ the term $\mathbf{v}$ is the only preimage of the
map~\eqref{eqtn:collapsemap}, so does not get cancelled by the
substitution $\spec$.
Let $H = \{\mathbf{n} \in \Z^d \mid \mathbf{n} \cdot \mathbf{v}
\in \Z\}$.   For all integer vectors $\mathbf{n} \in C'$
not in $H$, the map $\phi_{\mathbf{n}}^{\bftheta}$
sends the term $\mathbf{t^v}$
 in $\alpha$ to the term $\bftheta^{\mathbf{v}}x^{\mathbf{n} \cdot \mathbf{v}}$ in $\spec(\alpha)$  with
non-integer exponent, so $\spec(\alpha)$ is not a polynomial.
The lattice $H$ is a sublattice of $\Z^d$ but is not all of $\Z^d$ because
if the denominator (in reduced form) of $j$th coordinate of
$\mathbf{v}$ is $M > 1$, then $\mathbf{e}_j, 2\mathbf{e}_j, \dots, (M-1)\mathbf{e}_j$ are not
in $H$, so $H$ has index at least $M$ in $\Z^d$.
The element
$\mathbf{v}\in \supp(\alpha)$ can be chosen to have a coordinate with
an arbitrarily large denominator, so the sublattice $H$ to avoid can
be chosen to have an arbitrarily large index.
\end{proof}

Following Amoroso-Sombra \cite{amoroso2017factorization}, we say that for an irreducible variety $X$, a map $\pi : X
\rightarrow (\kk^*)^d$ has the {\em PB property } if for every isogeny (surjective group homomorphism  with finite kernel) $\lambda$ of $(\kk^*)^d$ 
the pullback $\lambda^* X := X
\times_{\lambda} (\kk^*)^d$ in \eqref{eqtn:pullback} is irreducible.
\begin{equation} \label{eqtn:pullback}
\begin{tikzcd}
 X
\times_{\lambda} (\kk^*)^d 
  \arrow[r] \arrow[d,"\lambda^*\pi"] & X \arrow[d,"\pi"] 
&   \\
 (\kk^*)^d \arrow[r, "\lambda"]
& (\kk^*)^d
\end{tikzcd}
\end{equation}

\begin{ex}
Consider the variety $X=\Var(x^2-yz^2)$.  The map $\pi:
X\rightarrow (\kk^*)^2$ projecting onto the first two coordinates does
not satisfy the PB property, since for the isogeny $\lambda:
(\kk^*)^2\rightarrow (\kk^*)^2$ given by  $(x,y)\mapsto (x,y^2)$ we have that
$X \times_{\lambda}
(\kk^*)^2=\Var(x^2-y^2z^2)=\Var(x-yz)\cup
\Var(x+yz)$ is reducible.
\end{ex}

\begin{theorem}\label{thm:main}
  Let $f \in \kk[t^{\pm 1}_1,\dots,t^{\pm 1}_d,y]$ be irreducible and  monic in $y$.
  Suppose that the projection of $\Var(f) \subseteq (\kk^*)^d \times \mathbb A^1$ 
  onto the first $d$ coordinates
  has the PB property.
  Fix $\mathbf{w} \in
  \mathbb R^d_{li}$.
  Then there exists an open cone $C$ containing
  $\mathbf{w}$ and finitely many sublattices $H_1,\dots, H_r \subset \Z^d$ such that for all
  vectors $\mathbf{n} \in (C \cap \Z^d)\setminus (H_1 \cup \cdots \cup
  H_r)$, and all $\bftheta \in (\kk^*)^d$ the Laurent polynomial
  $f(\theta_1 x^{n_1},\dots,\theta_d x^{n_d},y)
  \in \kk[x^{\pm
    1}, y]$ is irreducible.  Moreover, the sublattices can be
  chosen to be of arbitrarily high index.
\end{theorem}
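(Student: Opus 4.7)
If $\deg_y f = 1$, then $f(\theta_1 x^{n_1},\dots,\theta_d x^{n_d},y)$ is linear in $y$ and hence irreducible for every choice, so assume $s := \deg_y f \geq 2$. Since $\ourK$ is algebraically closed by Theorem~\ref{thm:algebraicallyclosed}, the plan is to factor $f = \prod_{i=1}^s (y-\alpha_i)$ with $\alpha_i \in \ourK$ algebraic over $\kk(t_1,\dots,t_d)$, to place all $\alpha_i$ inside a single $K_C$, and then for each proper nonempty $I \subsetneq \{1,\dots,s\}$ to control the partial product $g_I := \prod_{i \in I}(y-\alpha_i) \in K_C[y]$. Any nontrivial monic-in-$y$ factorization of $f(\theta_1 x^{n_1},\dots,\theta_d x^{n_d},y)$ in $\kk[x^{\pm 1},y]$ must be of the form $\spec(g_I)\cdot\spec(g_{I^c})$ for some proper $I$, so ruling out that all coefficients of some $g_I$ specialize into $\kk[x^{\pm 1}]$ will yield irreducibility.

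To place the $\alpha_i$ in a common $K_C$, I would first apply conditions \ref{item:perturbation} and \ref{item:cone} of $p$-discreteness to each $\supp(\alpha_i) \in \mathcal{A}_{\mathbf{w}}$: they supply an open cone $\sigma_i$ around $\mathbf{w}$ of perturbation directions preserving well-ordering, and a pointed cone $C_i$ with $\mathbf{w} \in \intt(C_i^\vee)$ containing a translate of $\supp(\alpha_i)$. Shrinking to an open cone $C \ni \mathbf{w}$ inside $\bigcap_i (\sigma_i \cap \intt(C_i^\vee))$ satisfies the well-ordering axiom defining $K_C$. The remaining requirement --- that only $p$-power denominators appear in the exponents --- is where the PB hypothesis enters crucially: the isogenies $t_j \mapsto t_j^N$ with $\gcd(N,p)=1$ generate the Kummer extensions of $\kk(t_1,\dots,t_d)$ of exponent coprime to $p$, and PB forces the splitting field of $f$ to contain no such nontrivial subextension, so no non-$p$-power denominators can occur in $\supp(\alpha_i)$. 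Thus $\alpha_i \in K_C$.

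With $\alpha_i \in K_C$, for each proper nonempty $I$ the coefficients $c_{I,k} := e_k(\alpha_i : i \in I)$ all lie in $K_C$ and are algebraic over $\kk(t_1,\dots,t_d)$. If every $c_{I,k}$ were in $\kk[t_1^{\pm 1},\dots,t_d^{\pm 1}]$ then $g_I$ would be a proper monic factor of $f$ in $\kk[t^{\pm 1},y]$, contradicting irreducibility of $f$. So for each proper $I$ some $c_{I,k}$ has either infinite support or has a non-integer coordinate in its (finite, $p$-power) support. In the first case, Lemma~\ref{lem:unboundedsupport} (when the support is unbounded) or Lemma~\ref{lem:unboundeddenominator} (when bounded) provides an open cone $C_{I,k} \subseteq C$ containing $\mathbf{w}$ and a sublattice $H_{I,k} \subseteq \Z^d$ of arbitrarily high index outside of which $\spec(c_{I,k}) \notin \kk[x^{\pm 1}]$; in the finite-support case the same conclusion follows from the argument of Lemma~\ref{lem:unboundeddenominator}, whose proof only needs one support point with a non-integer coordinate and so adapts verbatim. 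Since there are only finitely many proper $I$, set $C := \bigcap_{I,k} C_{I,k}$ and enumerate the $H_{I,k}$ as $H_1, \dots, H_r$. For $\mathbf{n} \in (C \cap \Z^d) \setminus \bigcup_j H_j$ and any $\bftheta$, no proper $g_I$ can specialize into $\kk[x^{\pm 1},y]$, so $f(\theta_1 x^{n_1},\dots,\theta_d x^{n_d},y)$ is irreducible, and the sublattices may be taken of arbitrarily high index.

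The main obstacle is the step putting the $\alpha_i$ in $K_C$: translating the PB hypothesis into absence of tame Kummer subextensions of the splitting field of $f$, and hence into the $p$-power denominator condition needed by $K_C$, is the most delicate part of the argument. Once this is in place, the remaining work is a moderately routine bookkeeping exercise combining irreducibility of $f$ with the field-theoretic lemmas already established.
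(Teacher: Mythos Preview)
Your overall architecture --- factor $f$ over $\ourK$, land the roots in some $K_C$, and then rule out each of the finitely many monic partial products $g_I$ from specializing to a Laurent polynomial --- matches the paper. The gap is exactly where you flagged it: the passage from PB to ``only $p$-power denominators in $\supp(\alpha_i)$'' is false as stated.

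Take $f = y^2 - t_1 - t_2$ over an algebraically closed field of characteristic $p \neq 2$. For any isogeny $(t_1,t_2) \mapsto (t_1^a t_2^b, t_1^c t_2^d)$ with $ad-bc \neq 0$, the pullback is $y^2 - t_1^a t_2^b - t_1^c t_2^d$, and $t_1^a t_2^b + t_1^c t_2^d$ is never a square in $\kk[t_1^{\pm 1}, t_2^{\pm 1}]$, so $f$ has PB. Yet the root $\alpha = \sqrt{t_1+t_2} = t_1^{1/2}\sum_{k \geq 0}\binom{1/2}{k}(t_2/t_1)^k$ has exponents $(1/2-k,k)$ with denominator $2$, which is not a power of $p$. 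Your Kummer argument breaks because the splitting field $\kk(t_1,t_2)(\sqrt{t_1+t_2})$ contains no $t_j^{1/N}$ at all; the presence of non-$p$-power denominators in a series expansion does not force a monomial Kummer subextension of the splitting field. The paper handles this differently: rather than arguing $N=1$, it picks the common $N$ from axiom~\ref{item:cone}, replaces $f$ by $g = f(t_1^N,\dots,t_d^N,y)$ (irreducible by PB, and still PB), notes that $\spec(g) = \phi_{N\mathbf{n}}^{\bftheta^N}(f)$ so irreducibility of specializations of $g$ implies that of $f$, and only then has roots $\mu(\alpha_i)$ with pure $p$-power denominators sitting in $K_C$.

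There is a second, smaller gap. In your finite-support branch you appeal to the argument of Lemma~\ref{lem:unboundeddenominator} ``verbatim'', but that lemma's ``arbitrarily large index'' conclusion needs points of arbitrarily large denominator, which finite support cannot supply; you would be stuck with sublattices of bounded index and could not guarantee their union misses a point of $\Z^d$. The paper avoids this branch entirely: it uses PB a second time to observe that $f$ cannot factor in $\ourK[y]$ with all coefficients of \emph{finite} support (else clearing the common denominator $N'$ via $t_i \mapsto t_i^{N'}$ would factor the pullback, contradicting PB). Thus every nontrivial factorization has some coefficient of infinite support, and Lemmas~\ref{lem:unboundedsupport} and~\ref{lem:unboundeddenominator} apply as written.
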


\begin{proof}
Consider $f$ as a polynomial in $\ourK[y]$.  Since
$\ourK$ is algebraically closed, we can write
$$f = \prod_{i=1}^s (y-\alpha_i)$$
where $\alpha_i \in \ourK$.
By the construction of $\ourK$, there is $N>0$ for which every denominator of an
exponent appearing in some $\alpha_i$ can be put in the form $Np^j$
for some $j \geq 0$, where $p=1$ if $\charr(\kk)=0$.  The isogeny $\mu
\colon (\kk^*)^d \rightarrow (\kk^*)^d$ given by $t_i \mapsto t_i^N$
extends to an inclusion of fields $\mu \colon \ourK \rightarrow
\ourK$, and a map
$\ourK[y] \rightarrow \ourK[y]$, sending $y$ to $y$.
We then have $g:=\mu(f) = \prod_{i=1}^s (y-\mu(\alpha_i))$.  It
suffices to prove the theorem for $g$, as
$\spec(g) = \phi_{N\vect{n}}^{\bftheta}(f)$, and
if $\phi_{N\mathbf{n}}^{\bftheta}(f)$ is irreducible, so is
$\spec(f)$.  We thus henceforth assume that $N=1$.

Since $N=1$,
all denominators are powers of $p$, so by axioms~\ref{item:perturbation} and \ref{item:cone} of $p$-discreteness there is an open cone $C$ with $\alpha_i \in K_C$
for all $1 \leq i \leq s$.  Thus for all $\mathbf{n} \in C \cap \mathbb Z^d$ and all $\bftheta \in (\kk^*)^d$ we have
$$\phi_{\mathbf{n}}^{\bftheta}(f) = \prod_{i=1}^s (y - \phi_{\vect
  n}^{\bftheta}(\alpha_i)),$$ so all roots of $\phi_{\vect
  n}^{\bftheta}(f)$ have the form $\phi_{\vect n}^{\bftheta}(\alpha_i)$ for a root $\alpha_i$ of $f$.
Since $\spec(f)$ is monic, if it is not irreducible, then it can be factored into monic polynomials, and monic factors of $\spec(f)$ are images under $\spec$ of monic factors of $f \in \ourK[y]$.  Thus we can consider all of the finitely many ways to factor $f$ into
two monic polynomials in $\ourK[y]$ and show that ``most'' substitutions of
$\mathbf{n}$ do not make the factors into polynomials in $\kk[x^{\pm 1},y]$.
Since
$f$ has $\deg_y(f)$ roots in $\ourK$, there are $m :=
2^{\deg_y(f)-1}-1$ ways to factor $f$ into two monic polynomials.

Since the projection $\pi:\Var(f)\to (\kk^*)^d$ onto the first $d$
coordinates has the PB property, $f$ cannot be factored into
polynomials in $\ourK[y]$ whose coefficients in $\ourK$ have finite
support.  If it did have such a factorization, then the isogeny
$\lambda:t_i\mapsto t_i^{N'}$ would clear the common denominator $N'$ of
the exponents of the coefficients in the factorization, violating the
PB property.  If a factorization of $f$ in $\ourK[y]$ involves a
coefficient with unbounded support, then by
Lemma~\ref{lem:unboundedsupport} there is an open cone containing
$\vect w$ such that any integer vector $\mathbf{n}$ in this cone gives
a substitution that is not a polynomial factorization.  If a
factorization involves a coefficient whose support is infinite but
bounded, by Lemma~\ref{lem:unboundeddenominator} we can choose a
sublattice of index greater than $m$ for $\mathbf n$ to avoid.  Since
the union of $m$ lattices with index greater than $m$ cannot cover all
of $\Z^d$, the conclusion follows.
\end{proof}

\begin{rem}
  The use of the field $\ourK$, instead of one of the larger fields of
  generalized Puiseux series such as the one constructed in
  \cite{Saavedra}, was crucial for Theorem~\ref{thm:main}.  The fact
  that the polynomial $f$ factors completely is a consequence of the
  fact that $\ourK$ is algebraically closed
  (Theorem~\ref{thm:algebraicallyclosed}), which used
  axiom~\ref{item:plusminus} of Definition~\ref{dfn: p-discrete}.  The
  fact that the roots of the transformed polynomial $g$ all live in
  $K_C$ for some open cone $C$ is a consequence of
  axiom~\ref{item:perturbation} of Definition~\ref{dfn: p-discrete}.
  Without this reduction, the specialization homomorphism
  $\spec$ might not be well defined.
\end{rem}

We are now ready to prove the main theorem of this paper.  Recall that
a morphism $\psi \colon (\kk^*)^r \rightarrow (\kk^*)^d$ is given in
coordinates by $\psi(t_1,\dots,t_r) =
(c_1\mathbf{t}^{\mathbf{a}_1},\dots,c_d\mathbf{t}^{\mathbf{a}_d})$,
where $c_i \in \kk^*$ and $\mathbf{a}_i \in \mathbb Z^r$ for $1 \leq
i \leq d$, and $c_i=1$ for all $i$ if $\psi$ is an embedding of tori.
Let $A$ be the $r \times d$ matrix with columns the $\mathbf{a}_i$.
The morphism $\psi$ is an embedding if the matrix $A$ has rank $r$,
and the greatest common divisor of the $r \times r$ minors of $A$ is
one.  Changes of coordinates on $(\kk^*)^r$ correspond to row
operations on $A$, so we conclude that $r$-dimensional subtori of
$(\kk^*)^d$ correspond to rational points in the Grassmannian
$\Gr(r,d)$.

  \theoremstyle{theorem}
  \newtheorem*{t:maintoricbertini}{Theorem~\ref{t:maintoricbertini}}
  \begin{t:maintoricbertini}[Toric Bertini]
Let $\kk$ be an algebraically closed field of arbitrary characteristic.
Let $X$ be a $d$-dimensional irreducible subvariety of $(\kk^*)^n$
where $d \geq 2$, and let $\pi \colon (\kk^*)^n \rightarrow (\kk^*)^d$ be a
morphism with $\pi|_X$ dominant and finite.  Suppose that $\pi|_X$
satisfies PB.  Then for every $1 \leq r \leq d-1$ the set of
$r$-dimensional subtori $T \subseteq (\kk^*)^d$ with $\pi^{-1}(\bftheta
\cdot T) \cap X$ irreducible for all $\bftheta \in (\kk^*)^d$ is dense in
$\Gr(r,d)$.
\end{t:maintoricbertini}

\begin{proof}
    The morphism $\pi$ is given by $\pi(\mathbf{t})_j = c_j
    \mathbf{t}^{\mathbf{p}_j}$ for $1\leq j \leq d$, $c_j \in \kk^*$,
    and $\mathbf{p}_j \in \mathbb Z^n$.  Let $P$ be the $d \times n$
    matrix with rows $\mathbf{p}_1,\dots,\mathbf{p}_d$.  Integer row
    and column operations correspond to changes of coordinates on
    $(\kk^*)^d$ and $(\kk^*)^n$, so we may assume that $P$ is in Smith
    normal form, and thus the morphism $\pi$ is projection onto the
    first $d$ coordinates followed by an isogeny $\lambda \colon t_i
    \mapsto t_i^{d_i}$ and multiplication by
    $\mathbf{c}:=(c_1,\dots,c_d) \in (\kk^*)^d$.  Since $\pi|_X$
    satisfies PB, $X \times_{\lambda} (\kk^*)^d \subset (\kk^*)^n$ is
    irreducible, and the map $\lambda^*\pi \colon X \times_{\lambda}
    (\kk^*)^d \rightarrow (\kk^*)^d$ in \eqref{eqtn:pullback} is
    projection onto the first $d$ coordinates followed by
    multiplication by $\mathbf{c}$.
    Furthermore, observe that $\lambda^* \pi$ satisfies PB.  If $T
    \subset (\kk^*)^d$ is an $r$-dimensional subtorus with
    $(\lambda^*\pi)^{-1}(\bftheta \cdot T)$ irreducible for all
    $\bftheta \in (\kk^*)^d$, then $\pi|_X^{-1}(\lambda(\bftheta)
    \cdot T')$ is irreducible for $T' = \lambda(T)$.  It thus suffices
    to prove the theorem for $X \times_{\lambda} (\kk^*)^d$.  As the
    factor $\mathbf{c}$ can be absorbed into $\bftheta$, we may thus
    assume that $\pi$ is projection onto the first $d$ coordinates.

    We now reduce to the case that $X$ is a hypersurface.  
  Fix $a_1,\dots,a_n \in \kk$, and consider the morphism
  $\rho \colon (\kk^*)^n \rightarrow (\kk^*)^d \times \mathbb A^1$
  given by $(t_1,\dots,t_n) \mapsto (t_1,\dots,t_d,\sum_{i=1}^n a_i
  t_i )$.  Writing $\pi' \colon (\kk^*)^d \times \mathbb A^1 \rightarrow (\kk^*)^d$ for the projection onto the first $d$ coordinates, we have the following commuting diagram.
\begin{center}
\begin{minipage}[c]{1.5in}
\xymatrix{ X \subseteq (\kk^*)^n
  \ar[dr]^{\rho} \ar[dd]_{\pi} & \\ & (\kk^*)^{d} \times \mathbb A^1 \ar[dl]_{\pi'}
  \\ (\kk^*)^d & \\
}
\end{minipage}
\end{center}

For generic $(a_1,\dots,a_n)$ the morphism $\rho$ is birational (for
example, by following the proof of \cite{Hartshorne}*{Proposition
  I.4.9}; note that this is independent of the characteristic of the
field).  Let $U \subset X$ be an open set on which $\rho$ is an
isomorphism, and let $Z = X \setminus U$.  Since $\pi$ is a proper
morphism, as it is finite, we conclude that $\pi(Z)$ is a subvariety
of $(\kk^*)^{d}$ of dimension at most $d-1$.  Since $\rho$ is
birational, the variety $\overline{\rho(X)}$ is $d$-dimensional, so is
a hypersurface in $(\kk^*)^{d} \times \mathbb A^1$, defined by a
polynomial $f \in \kk[t_1^{\pm 1},\dots,t_d^{\pm 1},y]$.  As $\pi|_X$
is finite, the polynomial $\sum_{i=1}^n a_i t_i \in \kk[X]$ satisfies
a monic polynomial with coefficients in $\kk[t_1^{\pm
    1},\dots,t_d^{\pm 1}]$.  This monic polynomial must be a multiple
of~$f$, so $f$ is monic in $y$.

We next show that $\pi': \Var(f)\rightarrow (\kk^*)^d$ satisfies PB. Consider the Cartesian square
\begin{equation}\label{eq:PBsquare}
\begin{tikzcd} \Var(f)\times_\mu (\kk^*)^d \arrow[r, "\alpha"] \arrow[d, "\mu^*\pi'"] &\Var(f)\subseteq (\kk^*)^d\times \mathbb{A}^1 \arrow[d, "\pi'"] \\ (\kk^*)^d \arrow[r, "\mu"] & (\kk^*)^d \end{tikzcd}
\end{equation}
where $\mu$ is an isogeny.  Since $\Var(f)\times_\mu (\kk^*)^d$ is the
hypersurface defined by the specialization, determined by $\mu$, of the
irreducible polynomial $f$, all irreducible components of
$\Var(f)\times_\mu (\kk^*)^d$ are $d$-dimensional.  This, together
with the finiteness of $\alpha$ (which follows since $\mu$ is finite
and finite morphisms are stable under base change), implies that all
irreducible components of $\Var(f)\times_\mu (\kk^*)^d$ map by
$\alpha$ onto $\Var(f)$.

Since $\rho|_X$ is birational, there exist non-empty open sets
$U\subseteq X$ and $W\subseteq \Var(f)$ such that $\rho|_U:
U\rightarrow W$ is an isomorphism.  Because all irreducible components
of $V(f)\times_\mu (\kk^*)^d$ map by $\alpha$ onto the irreducible
$V(f)$, we have $\overline{\alpha^{-1}(W)} = \Var(f)\times_\mu
(\kk^*)^d$.  Thus, to see that $\Var(f)\times_\mu (\kk^*)^d$ is
irreducible, it suffices to show that $\alpha^{-1}(W) = W\times_\mu
(\kk^*)^d$ is irreducible.  Since the isomorphism $\rho|_U : U \rightarrow
W$ respects the maps to $(\kk^*)^d$, we have $W \times_{\mu} (\kk^*)^d
\cong U \times_{\mu} (\kk^*)^d$, so the irreducibility follows from
the fact that $\pi|_X$ satisfies $PB$.  Thus $\pi'$ satisfies PB.

We next observe that it suffices to prove the theorem for $\pi' \colon
\Var(f) \rightarrow (\kk^*)^d$.  Let $T_0$ be any given
$r$-dimensional subtorus of $(\kk^*)^d$.  Note that for every $\bftheta \in (\kk^*)^d$ every irreducible
component $Y$ of $\pi|_X^{-1}(\bftheta \cdot T_0)$ satisfies $\pi(Y) = \bftheta \cdot T_0$.
Indeed, since $\bftheta \cdot T_0$ is a complete intersection, by the
Principal Ideal Theorem (see for example, \cite{Eisenbud}*{Theorem
  10.2}) we have $\dim(Y) \geq \dim(\bftheta \cdot T_0)$.  Since $\pi$
is finite, $\pi(Y)$ is a closed set, and $\dim(\pi(Y)) = \dim(Y)$ (see
for example, \cite{Eisenbud}*{Proposition 9.2}), so we conclude that
$\pi(Y) = \bftheta \cdot T_0$.  It thus follows that if $T_0$ is a
subtorus of $(\kk^*)^{d}$ with $\bftheta \cdot T_0$ not contained in
$\pi(Z)$
and $\pi|_X^{-1}(\bftheta
\cdot T_0)$ reducible for some $\bftheta \in (\kk^*)^d$, then every
irreducible component of $\pi|_X^{-1}(\bftheta \cdot T_0)$ intersects
$U$, so ${\pi'}^{-1}(\bftheta \cdot T_0) \cap \Var(f) =
\overline{\rho(\pi|_X^{-1}(\bftheta \cdot T_0))}$ is reducible as
well.

To prove the theorem, it thus suffices to prove that we can choose an
$r$-dimensional subtorus~$T$ arbitrarily close to $T_0$ in $\Gr(r,d)$ with $\bftheta \cdot T \not
\subseteq \pi(Z)$, and ${\pi'}^{-1}(\bftheta \cdot T) \cap \Var(f)$
irreducible for all $\bftheta \in (\kk^*)^d$.

Note that if ${\pi'}^{-1}(\bftheta \cdot T_0) \cap \Var(f)$ is
reducible, then for any one-dimensional subtorus $\tilde{T} \subset
T_0$ the preimage ${\pi'}^{-1}(\bftheta \cdot \tilde{T}) \cap \Var(f)$
is also reducible if it is nonempty.  To see this, consider a
parameterization of the torus $T_0$ by coordinates $z_1,\dots,z_r$.
Specializing $f$ to this parameterization yields a reducible
polynomial $f' \in \kk[z_1^{\pm 1},\dots,z_r^{\pm 1},y]$.  The
pullback ${\pi'}^{-1}(\bftheta \cdot \tilde{T})$ is a further nonzero
specialization of $f'$, so remains reducible.

Let $A_0$ be an $r \times d$ matrix corresponding to $T_0$, and let
$\mathbf{w} \in \mathbb R^d_{li}$ be close to the first row of $A_0$
in the Euclidean topology on $\mathbb P^{d-1}_{\mathbb Q}$  By Theorem~\ref{thm:main}, there is an
open cone $C$ containing $\mathbf{w}$ and a finite collection
$\mathcal H$ of lattices of arbitrarily high index for which for any
$\mathbf{n} \in C$ not in any lattice in $\mathcal H$ the subtorus
$\tilde{T}=(t^{n_1},\dots,t^{n_d})$ has ${\pi'}^{-1}(\bftheta \cdot
\tilde{T}) \cap \Var(f)$ irreducible for all $\bftheta \in (\kk^*)^d$.
We assume that the lower bound on the index of the lattices has been
chosen to guarantee that $\mathbb Z^d \setminus \bigcup_{H \in
  \mathcal H} H$ is nonempty.  We have $\bftheta \cdot \tilde{T}
\subseteq \pi(Z)$ only if the tropicalization $\trop(\bftheta \cdot
\tilde{T})$, which is an affine line in $\mathbb R^d$ with direction
vector $\mathbf{n}$, is contained in $\trop(\pi(Z))$.  For generic
$\mathbf{n}$, the polyhedral complex $\trop(\pi(Z))$ does not contain
any lines in the direction $\mathbf{n}$.

 Choose $\mathbf{n} \in C \setminus \bigcup_{H \in \mathcal H} H$
 close to $\mathbf{w}$ in the Euclidean topology on $\mathbb P^{d-1}_{\mathbb Q}$,
 such that $\trop(\pi(Z))$ does not contain any lines in direction
 $\mathbf{n}$, and let $A$ be the matrix obtained by replacing the
 first row of $A_0$ by $\mathbf{n}$.  Let $T$ be the $r$-dimensional
 subtorus of $(\kk^*)^d$ corresponding to $A$.  This contains the
 one-dimensional subtorus $\tilde{T}=\{ (t^{n_1},\dots,t^{n_d}) : t
 \in \kk^* \}$, for which ${\pi'}^{-1}(\bftheta \cdot \tilde{T}) \cap
 \Var(f)$ is irreducible and $\bftheta \cdot \tilde{T} \not \subseteq
 \pi(Z)$ for all $\bftheta \in (\kk^*)^d$, so ${\pi'}^{-1}(\bftheta
 \cdot T) \cap \Var(f)$ is irreducible, and thus $\pi|_X^{-1}(\bftheta
 \cdot T)$ is irreducible for all such $\bftheta$.  As $T$ is close to
 $T_0$ in $\Gr(r,d)$, this proves the theorem.
\end{proof}

\begin{rem}  \label{r:densenessnecessary}
  Theorem~\ref{t:maintoricbertini} differs from the version in
  \cite{FMZ}, in that we only show that the set of exceptional tori are
  the complement of a dense set, rather than essentially finite.  As
  already discussed, the finiteness is not achievable without the PB
  condition we, and \cite{FMZ}, impose.  We do not know whether the PB
  condition is necessary for the conclusion of
  Theorem~\ref{t:maintoricbertini}.  In our proof the main use is to
  guarantee that the sublattices $H_1,\dots,H_r$ to avoid do not cover
  $\mathbb Z^d$, so the ``genericity'' condition for $\mathbf{n}$ is
  nonempty.  However we do not know an example of a projection $\pi$
  failing PB where the union of the lattices is all of $\mathbb Z^d$.
  In addition, it is possible that the stronger finiteness conclusion
  of \cite{FMZ} holds when we assume PB.  To prove this using our
  techniques, we would need a deeper understanding of the structure of
  the cones $C$ used in Theorem~\ref{thm:main}.  In characteristic
  zero, under mild hypotheses, McDonald \cite{McD} relates these cones
  to the normal fan of the fiber polytope of a certain projection of
  the Newton polytope of the polynomial $f$.  It would be interesting
  to understand to what extent that can be generalized to arbitrary
  characteristic.
\end{rem}

\section{Tropical Bertini Theorem}

\label{sec:tropicalBertiniTheorem}

One application of Theorem~\ref{t:maintoricbertini} is that the tropical Bertini theorem of \cite{maclagan2019higher} holds in arbitrary characteristic, and thus the $d$-connectivity of the tropicalization of $d$-dimensional irreducible varieties holds in arbitrary characteristic.

The \emph{tropicalization} of a variety $X \subseteq (\kk^*)^n$, where $\kk$ is a valued field with valuation $\val \colon \kk^* \rightarrow \mathbb R$,~is
\[\trop(X) = \cl( (\val(x_1),\dots,\val(x_n) ) : (x_1,\dots,x_n) \in
X(L) ),\] where $L/\kk$ is a nontrivially valued algebraically closed
field extension, and $\cl()$ is the closure in the usual Euclidean
topology on $\mathbb R^n$.  By the Structure Theorem (see, for
example, \cite{TropicalBook}*{Chapter 3}, or \cite{BieriGroves})
$\trop(X)$ is the support of a connected polyhedral complex.

In \cite{maclagan2019higher} Maclagan and Yu proved a tropical Bertini theorem in characteristic zero.  We now remove the characteristic assumption.

We identify rational affine hyperplanes in $\mathbb R^n$ with points
of $\mathbb P^n_{\mathbb Q}$.

\theoremstyle{theorem}
\newtheorem*{t:tropicalBertini}{Theorem~\ref{t:tropicalBertini}}
\begin{t:tropicalBertini}[Tropical Bertini]
Let $X \subset (\kk^*)^n$ be an irreducible $d$-dimensional variety,
with $d \geq 2$, over an algebraically closed field $\kk$ with $\mathbb Q$
contained in the value group.  The set of rational affine hyperplanes
$H$ in $\mathbb R^n$ for which the intersection $\trop(X) \cap H$ is
the tropicalization of an irreducible variety is dense in the
Euclidean topology on $\mathbb P^n_{\mathbb Q}$.
\end{t:tropicalBertini}

\begin{proof}
We only need to replace a few sentences from the proof of
\cite{maclagan2019higher}*{Theorem 5}.  Explicitly, the proof holds
essentially verbatim (with the forward reference to Theorem 8 replaced
by a forward reference to our Theorem~\ref{t:maintoricbertini}) until
the sentence of \cite{maclagan2019higher}*{Theorem 5} ``We can now
apply Theorem 8 to $\rho|_Y \colon Y \rightarrow (\kk^*)^d$'', where ``Theorem 8'' should again be replaced by Theorem~\ref{t:maintoricbertini} of this paper.  The proof then continues as follows.

By Theorem~\ref{t:maintoricbertini}, since $\rho|_Y$ satisfies PB, and
is dominant and finite, the set of $(d-1)$-dimensional subtori $T'
\subseteq (\kk^*)^d$ such that $\pi_X^{-1}(\bftheta \cdot T')$ is irreducible
for all $\bftheta \in(\kk^*)^d$ is dense in $\Gr(d-1,d) \cong \mathbb
P^{d-1}_{\mathbb Q}$.  By definition, this means that the set of
affine hyperplanes $\overline{H}=\trop(\bftheta \cdot  T')$ for such $\bftheta,T'$ is dense in
$\mathbb P^d_{\mathbb Q}$.

Fix one such $\overline{H} = \trop(\bftheta \cdot T')$, and let $H$ be the
hyperplane in $\mathbb R^n$ defined by $H = (\trop(\rho)^{-1}(\overline{H}))$.  Since $\rho$ is a monomial map, we have $H = \trop(\rho^{-1}(\bftheta \cdot T'))$.
As $\trop(\rho)$ is injective on every maximal face of $\trop(Y)$, the intersection $H \cap \trop(Y)$ is transverse, so by the Transverse Intersection Lemma~\cite{OssermanPayne}*{Theorem~1.1}, \cite{BJSST}*{Lemma 15},
\cite{TropicalBook}*{Theorem 3.4.12},
$$H \cap \trop(Y) = \trop(\rho^{-1}|_Y(\bftheta \cdot T')).$$ Thus $H
\cap \trop(Y)$ is the tropicalization of the irreducible variety
$\rho|_Y^{-1}(\bftheta \cdot T')$.  Note that 
\[
\trop(\mu'(\rho|_Y^{-1}(\bftheta \cdot T'))) = \trop(\mu')(H \cap \trop(Y)) = \trop(\mu')(H) \cap \trop(X) = \trop(\pi)^{-1}(\trop(\mu)(\overline{H})).
\]  
Since $\trop(\mu) \in \GL(n,\mathbb Q)$, and the set of $\overline{H}$ is dense in $\mathbb P^d_{\mathbb Q}$, the set of $\trop(\mu)(\overline{H})$ is dense in $\mathbb P^d_{\mathbb Q}$ as required.
\end{proof}

In \cite{maclagan2019higher} Theorem~\ref{t:tropicalBertini} is used to prove
a higher connectivity theorem for tropical varieties.  While the
statement of \cite{maclagan2019higher}*{Theorem 1} assumes that the field $\kk$
has characteristic zero, as observed in \cite{maclagan2019higher}*{Remark 11}
this is only needed to apply the tropical Bertini theorem, so in light
of Theorem~\ref{t:tropicalBertini} we have the following
generalization of \cite{maclagan2019higher}*{Theorem 1}.  A pure polyhedral
complex $\Sigma$ of dimension $d$ is $d$-connected through codimension
one if it is still connected after removing any $d-1$ closed facets.

\begin{theorem}
\label{thm:connectedness}
Let $\kk$ be a field that is either algebraically closed, complete, or
real closed with convex valuation ring.  Let $X$ be a $d$-dimensional
irreducible subvariety of $(\kk^*)^n$. 
Let $\Sigma$ be a pure
$d$-dimensional rational polyhedral complex with support $|\Sigma| =
\trop(X)$.  Write $\ell$ for the dimension of the lineality space of
$\Sigma$.  Then $\Sigma$ is $(d-\ell)$-connected through codimension
one.
\end{theorem}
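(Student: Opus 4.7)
The plan is to follow the proof of \cite{maclagan2019higher}*{Theorem 1} essentially verbatim, substituting our Theorem~\ref{t:tropicalBertini} for the characteristic-zero tropical Bertini theorem invoked there. As already noted in \cite{maclagan2019higher}*{Remark 11}, this is the only place where the characteristic hypothesis enters, so once Theorem~\ref{t:tropicalBertini} is available in arbitrary characteristic the whole argument extends with no further modification. This is the most honest one-line description of the proof; the remainder of the plan is a sketch of the structure for the reader's convenience.

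The proof proceeds by induction, and the base case reduces to the connectedness of $\trop(X)$, which holds for any irreducible $X$ by the Structure Theorem for tropicalizations (see \cite{TropicalBook}*{Chapter 3}) and is a characteristic-free statement. For the inductive step, one chooses via Theorem~\ref{t:tropicalBertini} a rational affine hyperplane $H \subset \R^n$ whose intersection with $\trop(X)$ is the tropicalization of an irreducible variety $Y$ of dimension $d-1$. The density of the set of admissible $H$ in $\mathbb{P}^n_{\Q}$ allows one to further require that $H$ be transverse to $\Sigma$ and not parallel to the lineality space, so that slicing genuinely reduces the effective dimension while keeping the quantity $d-\ell$ under control. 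A combinatorial argument (as carried out in \cite{maclagan2019higher}) then transports the higher connectivity of the slice $\trop(Y) = \trop(X) \cap H$ back to $\Sigma$ by reconnecting paths between facets through the codimension-one structure meeting $H$.

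The main potential obstacle is guaranteeing that one can choose the slicing hyperplane to simultaneously satisfy the irreducibility condition provided by tropical Bertini \emph{and} the transversality/non-parallelism constraints required by the combinatorial induction. This is not in fact an obstacle, because Theorem~\ref{t:tropicalBertini} gives a \emph{dense} set of hyperplanes while the transversality and non-parallelism requirements cut out a dense open set; the intersection is therefore nonempty, and any choice from it closes the Maclagan--Yu induction. No new input beyond Theorem~\ref{t:tropicalBertini} is needed, and in particular there is no obstruction coming from the prescribed list of base fields (algebraically closed, complete, or real closed with convex valuation ring), since that list already appears in \cite{maclagan2019higher}*{Theorem 1} and plays no role in the characteristic extension.
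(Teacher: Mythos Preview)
Your proposal is correct and matches the paper's own treatment: the paper does not give a separate proof of this theorem but simply observes (citing \cite{maclagan2019higher}*{Remark 11}) that the characteristic-zero hypothesis in \cite{maclagan2019higher}*{Theorem 1} was used only to invoke the tropical Bertini theorem, so Theorem~\ref{t:tropicalBertini} removes it. Your additional sketch of the induction is accurate and goes a bit beyond what the paper records, but the approach is identical.
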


\begin{bibdiv}

\begin{biblist}

\bib{abhyankar}{article}{
    AUTHOR = {Abhyankar, Shreeram},
     TITLE = {On the ramification of algebraic functions},
   JOURNAL = {Amer. J. Math.},
  FJOURNAL = {American Journal of Mathematics},
    VOLUME = {77},
      YEAR = {1955},
     PAGES = {575--592},
      ISSN = {0002-9327},
   MRCLASS = {14.0X},
  MRNUMBER = {71851},
MRREVIEWER = {H. T. Muhly},
       URL = {https://doi.org/10.2307/2372643},
}

\bib{AdamczewskiBell}{article}{
  title={On vanishing coefficients of algebraic power series over fields of positive characteristic},
  author={Adamczewski, Boris},
  author={Bell, Jason P.},
  journal={Invent. Math.},
  fjournal={Inventiones mathematicae},
  volume={187},
  number={2},
  pages={343--393},
  year={2012},
  publisher={Springer}
}
  
\bib{amoroso2017factorization}{article}{
      author={Amoroso, Francesco},
      author={Sombra, Mart\'{\i}n},
      title={Factorization of bivariate sparse polynomials},
      journal={Acta Arith.},
      volume={191},
      date={2019},
      number={4},
      pages={361--381},
      issn={0065-1036},
      }

\bib{ArocaIlardi}{article}{
  author={Aroca, Fuensanta},
  author={Ilardi, Giovanna},
  title={A family of algebraically closed fields containing polynomials in
    several variables},
  journal={Comm. Algebra},
  volume={37},
  date={2009},
  number={4},
  pages={1284--1296},
  issn={0092-7872},
  }

\bib{ArocaRond}{article}{
  author={Aroca, Fuensanta},
  author={Rond, Guillaume},
  title={Support of Laurent series algebraic over the field of formal power
    series},
  journal={Proc. Lond. Math. Soc. (3)},
  volume={118},
  date={2019},
  number={3},
  pages={577--605},
  issn={0024-6115},
}

\bib{BJSST}{article}{
  label = {BJSST07},
  author={Bogart, Tristram},
  author={Jensen, Anders N.},
  author={Speyer, David},
  author={Sturmfels, Bernd},
  author={Thomas, Rekha R.},
  title={Computing tropical varieties},
  journal={J. Symbolic Comput.},
  volume={42},
  date={2007},
  number={1-2},
  pages={54--73},
}

\bib{BieriGroves}{article}{
  author={Bieri, Robert},
  author={Groves, J. R. J.},
  title={The geometry of the set of characters induced by valuations},
  journal={J. Reine Angew. Math.},
  volume={347},
  date={1984},
  pages={168--195},
}

\bib{Chevalley}{book}{
    AUTHOR = {Chevalley, Claude},
     TITLE = {Introduction to the theory of algebraic functions of one
              variable},
    SERIES = {Mathematical Surveys, No. VI},
 PUBLISHER = {American Mathematical Society, Providence, R.I.},
      YEAR = {1963},
     PAGES = {xi+188},
}

\bib{Christol}{article}{
  author={Christol, Gilles},
  title={Ensembles presque periodiques $k$-reconnaissables},
  language={French, with English summary},
  journal={Theoret. Comput. Sci.},
  volume={9},
  date={1979},
  number={1},
  pages={141--145},
  issn={0304-3975},
}

\bib{Eisenbud}{book}{
  author={Eisenbud, David},
  title={Commutative algebra: with a view toward algebraic geometry},
series={Graduate Texts in Mathematics},
  volume={150},
  publisher={Springer-Verlag, New York},
  date={1995},
  pages={xvi+785},
  }

\bib{FMZ}{article}{
  title={On fewnomials, integral points, and a toric version of Bertini’s theorem},
  author={Fuchs, Clemens},
  author={Mantova, Vincenzo},
  author={Zannier, Umberto},
  journal={J. Am. Math. Soc.},
  fjournal={Journal of the American Mathematical Society},
  volume={31},
  number={1},
  pages={107--134},
  year={2018}
}

\bib{GonzalezPerez}{article}{
  author={Gonz\'{a}lez P\'{e}rez, Pedro D.},
  title={Singularit\'{e}s quasi-ordinaires toriques et poly\`edre de Newton du
    discriminant},
  language={French, with French summary},
  journal={Canad. J. Math.},
  volume={52},
  date={2000},
  number={2},
  pages={348--368},
  issn={0008-414X},
}

\bib{Hartshorne}{book}{
  author={Hartshorne, Robin},
  title={Algebraic geometry},
  volume={52},
  publisher={Springer Science Business Media},
  date={1977}
  }

\bib{KedlayaEarly}{article}{
  author={Kedlaya, Kiran S.},
  title={The algebraic closure of the power series field in positive
    characteristic},
  journal={Proc. Amer. Math. Soc.},
  volume={129},
  date={2001},
  number={12},
  pages={3461--3470},
}

\bib{KedlayaErratum}{article}{
  author={Kedlaya, Kiran S.},
  title={On the algebraicity of generalized power series},
  journal={Beitr. Algebra Geom.},
  volume={58},
  date={2017},
  number={3},
  pages={499--527},
}

\bib{TropicalBook}{book}{
  author={Maclagan, Diane},
  author={Sturmfels, Bernd},
  title={Introduction to tropical geometry},
  series={Graduate Studies in Mathematics},
  volume={161},
  publisher={American Mathematical Society, Providence, RI},
  date={2015},
  pages={xii+363},
}

\bib{maclagan2019higher}{article}{
  title={Higher Connectivity of Tropicalizations},
  author={Maclagan, Diane},
  author={Yu, Josephine},
  journal={Mathematische Annalen},
  year={2021}
  doi={10.1007/s00208-021-02281-9}
}

\bib{McD}{article}{
    AUTHOR = {McDonald, John},
     TITLE = {Fiber polytopes and fractional power series},
   JOURNAL = {J. Pure Appl. Algebra},
  FJOURNAL = {Journal of Pure and Applied Algebra},
    VOLUME = {104},
      YEAR = {1995},
    NUMBER = {2},
     PAGES = {213--233},
      ISSN = {0022-4049},
   MRCLASS = {52B20},
MRREVIEWER = {P. McMullen},
       URL = {https://doi.org/10.1016/0022-4049(94)00129-5},
}

\bib{Neumann}{article}{
  title={On ordered division rings},
  author={Neumann, Bernhard H.},
  journal={Trans. Amer. Math. Soc.},
  fjournal={Transactions of the American Mathematical Society},
  volume={66},
  number={1},
  pages={202--252},
  year={1949},
  publisher={JSTOR}
}

\bib {OssermanPayne}{article}{
  AUTHOR = {Osserman, Brian},
  author={Payne, Sam},
  TITLE = {Lifting tropical intersections},
  JOURNAL = {Doc. Math.},
  VOLUME = {18},
  YEAR = {2013},
  PAGES = {121--175},
  ISSN = {1431-0635},
}

\bib{Rayner1}{article}{
  title={An algebraically closed field},
  author={Rayner, Francis J.},
  journal={Glasg. Math. J.},
  fjournal={Glasgow Mathematical Journal},
  volume={9},
  number={2},
  pages={146--151},
  year={1968},
  publisher={Cambridge University Press}
}



\bib{Saavedra}{article}{
   author={Saavedra, V\'{\i}ctor M.},
   title={The McDonald theorem in positive characteristic},
   journal={J. Algebra},
   volume={491},
   date={2017},
   pages={219--240},
   issn={0021-8693},
}

\bib{zannier2010hilbert}{article}{
  title={Hilbert irreducibility above algebraic groups},
  author={Zannier, Umberto},
  journal={Duke Math. J.},
fjournal={Duke Mathematical Journal},
  volume={153},
  number={2},
  pages={397--425},
  year={2010},
  publisher={Duke University Press}
}

\end{biblist}

\end{bibdiv}

\medskip 
\noindent
\footnotesize {\bf Authors' addresses:}
\smallskip 

\noindent{Department of Mathematics, Kalamazoo College,  MI,  USA} \hfill {\tt francesca.gandini@kzoo.edu}

\noindent{School of Mathematics, 
The University of Edinburgh,
Edinburgh EH9 3JZ,
United Kingdom} \hfill {\tt m.hering@ed.ac.uk}

\noindent{Mathematics Institute, University of Warwick, Coventry CV4 7AL, United Kingdom} \hfill {\tt D.Maclagan@warwick.ac.uk}

\noindent{Department of Mathematics: Algebra and Geometry, Ghent University, Belgium 
  \hfill {\tt fatemeh.mohammadi@ugent.be}\\
 Department of Mathematics and Statistics, UiT - 
  The Arctic University of Norway, Troms\o, Norway
} 

\noindent{Department of Mathematics and Statistics, McMaster University, Canada} \hfill {\tt rajchgot@math.mcmaster.ca}

\noindent{School of Mathematics, Georgia Institute of Technology, Atlanta GA, USA} \hfill {\tt wheeler@math.gatech.edu}

\noindent{School of Mathematics, Georgia Institute of Technology, Atlanta GA, USA} \hfill {\tt jyu@math.gatech.edu}

\end{document}